\renewcommand{\(}{\left(}
\renewcommand{\)}{\right)}
\renewcommand{\[}{\left[}
\renewcommand{\]}{\right]}
\renewcommand{\a}{\mathbf{a}}
\newcommand{\n}{\mathbf{n}}
\renewcommand{\S}{\mathbf{S}}
\newcommand{\z}{\mathbf{z}}
\newcommand{\x}{\mathbf{x}}
\newcommand{\I}{\mathbf{I}}
\newcommand{\A}{\mathbf{A}}
\newcommand{\T}{\mathbf{T}}
\newcommand{\U}{\mathbf{U}}
\newcommand{\M}{\mathbf{M}}
\newcommand{\N}{\mathbb{N}}
\renewcommand{\u}{\mathbf{u}}
\renewcommand{\v}{\mathbf{v}}
\newcommand{\B}{\mathbf{B}}
\newcommand{\Tr}[1]{{\rm{Tr}}\left(#1\right)}
\newcommand{\End}[1]{{\rm{End}}}
\renewcommand{\log}[1]{{\rm{log}}#1}
\renewcommand{\arg}[1]{{\rm{arg}}#1}
\renewcommand{\vec}[1]{{\rm{vec}}\(#1\)}
\newtheorem{lemma}{Lemma}
\newtheorem{definition}{Definition}
\newtheorem{theorem}{Theorem}
\newtheorem{corollary}{Corollary}
\newcommand{\norm}[1]{\left\lVert#1\right\rVert}
\begin{document}
\title{Performance Analysis of Tyler's Covariance Estimator}

\author{Ilya Soloveychik and Ami Wiesel, \\
 Rachel and Selim Benin School of Computer Science and Engineering, The Hebrew University of Jerusalem, Israel

\thanks{This work was partially supported by the Intel Collaboration Research Institute for Computational Intelligence and Kaete Klausner Scholarship.}
}

\maketitle

\begin{abstract}
This paper analyzes the performance of Tyler's M-estimator of the scatter matrix in elliptical populations. We focus on the non-asymptotic setting and derive the estimation error bounds depending on the number of samples $n$ and the dimension $p$. We show that under quite mild conditions the squared Frobenius norm of the error of the inverse estimator decays like $p^2/n$ with high probability.
\end{abstract}

\begin{IEEEkeywords}
Elliptical distribution shape matrix estimation, scatter matrix M-estimators, Tyler's scatter estimator, concentration bounds.
\end{IEEEkeywords}

\section{Introduction}
Estimation of large covariance matrices, particularly in situations where the data dimension $p$ and the sample size $n$ are of close magnitudes has recently attracted considerable attention. Estimators in this field can be classified based on the underlying distribution and the additional structure assumptions. Most of the research is traditionally devoted to the multivariate Gaussian setting which is currently well understood. Various algorithms based on different structures and their performance analysis have been derived for the Gaussian distributions, see e.g. \cite{ledoit2003improved,bickel2008regularized,rothman2008sparse}. This allows for higher reliability and better tuning of regularization parameters. Recently, similar challenges have appeared in the more ambitious setting of non-Gaussian and robust estimation. A prominent approach in this area is Tyler's scatter estimator \cite{tyler1987distribution}. The goal of this paper is to analyze its non-asymptotic behavior as defined below more rigorously. We start by first reviewing the state of the art in both Gaussian and Tyler's covariance estimation, and then introduce our result.

Most of the works on covariance estimation address the Gaussian scenario. When the number of samples is greater than the dimension, the Maximum Likelihood Estimator (MLE) of the covariance exists with probability one and coincides with the Sample Covariance Matrix (SCM). Recently, there has emerged a great amount of literature on regularized versions of this estimator, their parameter tuning and performance analysis, such as shrinkage-estimator, see e.g. \cite{ledoit2003improved,rothman2008sparse}.

Roughly speaking, covariance estimation performance analyses can be divided into three regimes. The first is classical asymptotic analysis. This approach assumes that the dimension of the underlying sample space $p$ is fixed and that the number of samples grows $n\rightarrow \infty$. Typical results in this regime concern asymptotic consistency and computation of asymptotic variance of an estimator in comparison to the Cramer-Rao lower bound. The second regime is also asymptotic and is based on Random Matrix Theory (RMT). It treats the case where both $n\rightarrow \infty$ and $p\rightarrow \infty$ while their ratio tends to a fixed number $p/n \rightarrow \zeta \in [0;\infty)$. One of the most important results of this kind concerning SCM is the Marchenko-Pastur law \cite{marvcenko1967distribution} defining the asymptotic distribution of the eigenvalues of the properly defined sequence of SCM. Other typical results include contributions on the bulk and edge regions of the spectrum of SCM, see e.g. \cite{tracy1996orthogonal, el2007tracy, bai2008limit, baik2006eigenvalues}. These works led to numerous theoretical and practical breakthroughs in covariance estimation and its applications \cite{cardoso2008cooperative, couillet2011fluctuations,mestre2008modified,schmidt1986multiple, couillet2011eigen, vallet2012improved}.

The third kind of results stems from the non-asymptotic analysis of the SCM, see e.g. \cite{vershynin2012compressed, vershynin2012close,srivastava2011covariance}. In contrast to the previously described regimes, the non-asymptotic results answer the question what error bound can one obtain given the values of $n$ and $p$. They are usually formulated in the form of concentration of measure results:
\begin{equation*}
\norm{\widetilde{\bm\Theta} - \bm\Theta_0} \leq f(n,p) \text{ with probability at least } 1-e^{g(n,p)},
\label{concentr_in}
\end{equation*}
where the particular estimator $\widetilde{\bm\Theta}$ and norm should be specified and the functions $f$ and $g$ should satisfy some properties. Unlike the asymptotic regimes, such kinds of results describe the speed of convergence of the estimator to the true covariance matrix $\bm\Theta_0$.

Recently non-asymptotic analysis of SCM in Gaussian distributions has become popular among signal processing society due to advances in high dimensional statistics. In particular, \cite{bickel2008regularized,ravikumar2011high, rothman2008sparse} consider regularized covariance estimation and include its performance analysis. A common thread to all of these works is that the estimators are defined as the solutions of convex optimization problems, and the analysis is directly related to the notion of strong convexity.

In many applications the underlying multivariate distribution is actually non-Gaussian and robust covariance estimation  methods are required. This occurs whenever the probability distribution of the measurements is heavy-tailed or a small proportion of the samples represents outlier behavior, \cite{huber1964robust, maronna1976robust}. A common robust estimator of scatter is due to Tyler \cite{tyler1987distribution}. Given $n$ independent, identically distributed (i.i.d.) measurements $\x_i \in \mathbb{R}^p,i=1,\dots,n,$ Tyler's shape matrix estimator is defined as the solution to the fixed point equation
\begin{equation}
\T = \frac{p}{n}\sum_{i=1}^n \frac{\x_i\x_i^T}{\x_i^T\T^{-1}\x_i}.
\label{tylerequ}
\end{equation}
When $\x_i$ are Generalized Elliptically (GE) distributed \cite{frahm2004generalized}, their shape matrix $\bm\Theta_0$ is positive definite and $n>p$, Tyler's estimator exists with probability one and is a consistent estimator of $\bm\Theta_0$ up to a positive scaling factor. The GE family includes as particular cases generalized Gaussian distribution, compound Gaussian, elliptical and many others \cite{frahm2004generalized}. Therefore, it has been successfully used to replace the SCM in many applications such as anomaly detection in wireless sensor networks \cite{chen2011robust}, antenna array processing \cite{ollila2003robust} and radar detection \cite{abramovich2007time, ollila2012complex, bandiera2010knowledge, pascal2008covariance}.


Performance analysis of Tyler's estimator dates back to the original work in classical robust statistics literature \cite{tyler1987distribution}. In the classical asymptotic regime where $p$ is fixed and $n \rightarrow \infty$ it was shown that, when properly scaled, Tyler's estimator is strongly consistent with the true covariance matrix, if it exists, and is asymptotically normally distributed. Its asymptotic variance, which coincides with the Cramer-Rao bound, was analyzed in \cite{besson2013fisher, greco2013cramer}. RMT regime performance results were also reported. For the case $n, p \rightarrow \infty, p/n \rightarrow 0$ it was shown in \cite{duembgen1997asymptotic} that the condition number of Tyler's estimator multiplied by the inverse true shape matrix of the underlying distribution converges to $1 + O(\sqrt{p/n})$. In \cite{frahm2012semicircle} it was demonstrated that the empirical spectral distribution of $\sqrt{n/p}(\T-\I)$ converges to the semicircle law in probability, when the underlying population is white and Tyler's estimator is properly scaled.

In the paper \cite{zhang2014marchenko} the authors proved that Tyler's M-estimator converges in operator norm to the SCM matrix as $n, p \rightarrow \infty$ and $p/n \rightarrow \zeta \in (0,1)$, when data samples follow the standard normal distribution. The authors extended this result to elliptical distributions and proved that the empirical spectral density of Tyler's M-estimator converges to the Marchenko-Pastur distribution. The paper \cite{zhang2014marchenko} also quantified the non-asymptotic behavior of Tyler's M-estimator, but in a different way with emphases on the RMT regime, the Marchenko-Pastur law, and comparison to the SCM. The paper \cite{couillet2014large} analyzed the asymptotic behavior of regularized Tyler's estimator in the RMT regime, which allowed optimal parameter tuning. Additional RMT results on other M-estimators were developed in \cite{couillet2013random, couillet2012robust}.

We focus on the the non-asymptotic analysis of Tyler's estimator for moderate values of $n$ and $p$ based on the concentration of measure phenomenon. Note that the estimator is not given in closed form and has to be iteratively computed using the fixed point iteration (\ref{tylerequ}). In order to exploit the optimization based machinery discussed above, we rely on an alternative derivation of Tyler's estimator. In particular, the estimator can also be obtained as an MLE of normalized GE distributed vectors defined as $\frac{\x_i}{\norm{\x_i}}$, \cite{frahm2004generalized}. Our method is therefore based on the analysis of the negative log-likelihood function of this distribution. We present high probability error bounds on the deviation of a properly scaled Tyler's estimator from the true underlying shape matrix. In particular, we prove that as long as $n$ is larger than $p$ the Frobenius norm of the error in inverse matrices decays like $\frac{p}{\sqrt{n}}$ with high probability. We also show that our performance bounds exhibit correct asymptotic behavior in the RMT regime. This contribution basically complements the previous classical asymptotic and RMT analysis of Tyler's method. The derivation generally follows the optimization based approach due to \cite{rothman2008sparse, ravikumar2011high}.

The paper is organized as following: first we introduce notations, state the problem, the main result and provide a discussion of it. Then we outline the strategy of the proof and bring in a few auxiliary results. After this we prove the main theorem. The body of the article contains the sketch of the proof with the statements of the most significant lemmas. Finally, we provide numerical simulations illustrating the obtained results. The proofs of lemmas and the rest of the auxiliary statements are left for the Appendices.

\subsection{Notations}
Denote by $\mathcal{S}(p)$ the linear space of $p \times p$ symmetric real matrices and by $\mathcal{P}(p) \subset \mathcal{S}(p)$ the closed cone of positive semi-definite matrices. $\I$ stands for the identity matrix of a proper dimension.

We endow $\mathcal{S}(p)$ with the scalar product $(\A,\B) = \Tr{\A\B}$, which induces the Frobenius norm on it. $\norm{\cdot}$ will denote the Euclidean norm for vectors, $\norm{\cdot}_F$ - the Frobenius norm and $\norm{\cdot}_2$ - the spectral norm for matrices. The linear space $\mathbb{R}^p$ is treated as a column vector space with the standard inner product. Given a matrix $\A$ we denote by $\vec{\A}$ a column vector obtained by stacking the columns of $\A$.

For a matrix $\A \in \mathcal{P}(p)$ denote by $\lambda_{\min}(\A)$ and $\lambda_{\max}(\A)$ its minimal and maximal eigenvalues, correspondingly. When $\lambda_{\min}(\A) > 0$ we write $\A \succ 0$ and denote by $\kappa(\A) = \frac{\lambda_{\max}(\A)}{\lambda_{\min}(\A)}$ its condition number. $|\A|$ stands for the determinant of $\A$.

Let $Q$ be a quadratic form over a finite dimensional Euclidean space $\mathcal{V}$, define its minimal and maximal eigenvalues as
\begin{equation}
\lambda_{\min}(Q) = \inf_{\a \in \mathcal{V}, \norm{\a}=1} Q(\a),\quad\lambda_{\max}(Q) = \sup_{\a \in \mathcal{V}, \norm{\a}=1} Q(\a).
\label{lambda_def}
\end{equation}
The norm of $Q$ is defined as
\begin{equation*}
\norm{Q}_2 = \max(|\lambda_{\min}(Q)|, |\lambda_{\max}(Q)|).
\end{equation*}

For $n$ instances $a_1,\dots,a_n$ of scalars, vectors, matrices or functions we denote by $\widehat{a}$ their arithmetic average, when the index of summation is obvious from the context.

Matrices are denoted by Capital bold letters $\M$, column vectors by non-capital bold $\v$, scalars by non-capital $r$, operators and quadratic forms by Capital $T$ letters.

\section{Tyler's estimator as a MLE}
We define Tyler's estimator as an MLE of a shape matrix parameter of a specific real spherical $p$-dimensional distribution. The likelihood function of this distribution is later used to derive error bounds of the estimator using its curvature properties.
\begin{definition}
\label{def}
Assume $\bm\Theta_0 \in \mathcal{P}(p), \bm\Theta_0 \succ 0$, then the function
\begin{equation}
p(\x) = \frac{\Gamma(p/2)}{2\sqrt{\pi^p}} \frac{1}{\sqrt{|\bm\Theta_0|}(\x^T  \bm\Theta_0^{-1}\x)^{p/2}}
\label{tyler_distr}
\end{equation}
is a probability density function of a vector $\x \in \mathbb{R}^p$ lying on a unit sphere. This distribution is usually referred to as the Angular Central Gaussian (ACG) distribution on a sphere \cite{tyler1987statistical} and we denote it as $\x \sim \mathcal{U}(\bm\Theta_0)$. The matrix $\bm\Theta_0$ is referred to as a shape matrix of the distribution and is a multiple of the covariance matrix of $\x$.
\end{definition}

The ACG distribution is closely related to the class of GE distributions, which includes Gaussian, compound Gaussian, elliptical, skew-elliptical, ACG and other distributions, \cite{frahm2007tyler}. An important property of the GE family is that the shape matrix of a population does not change when the vector is divided by its Euclidean norm \cite{frahm2004generalized, frahm2007tyler}. After normalization, any GE vector becomes ACG distributed. This allows us to treat all these distributions together using a single robust estimator.

Assuming $\bm\Theta \in \mathcal{P}(p), \bm\Theta \succ 0$ and given $n>p$ i.i.d. copies of a vector $\x\sim \mathcal{U}(\bm\Theta)\colon\x_i,1=1,\dots,n$ we derive the MLE estimator of the shape matrix. For this sake introduce the scaled negative log-likelihood function:
\begin{equation}
\widetilde{f}(\bm\Theta;\x) = \log|\bm\Theta| + p\log(\x^T  \bm\Theta^{-1} \x).
\label{neg_ll}
\end{equation}
The function (\ref{neg_ll}) is non-convex in $\bm\Theta$. Nevertheless, its critical point given as the solution to (\ref{tylerequ}) provides the global minima with probability one, \cite{wiesel2012geodesic, wiesel2012unified, zhang2013multivariate}.

The negative log-likelihood (\ref{neg_ll}) is invariant under multiplication of the shape matrix by a positive constant, thus we are only interested in the estimation of the shape matrix up to a positive scalar factor. In order to obtain a unique MLE we fix the scale of the estimator by assuming that $\Tr{\bm\Theta_0^{-1}}$ of the true covariance matrix is known (or arbitrarily fixed). Specifically, we define Tyler's estimator to be the solution to the program
\begin{equation}
\T = \arg \left\{
\begin{aligned}
& \underset{\bm\Theta}{\min}
& & \frac{1}{n}\sum_{i=1}^n \widetilde{f}(\bm\Theta;\x_i) \\
& \text{subject to}
& & \Tr{\bm\Theta^{-1}} = \Tr{\bm\Theta_0^{-1}} \label{add_def_tyler}.
\end{aligned}
\right.
\end{equation}

\section{the Main Result}
In this section we introduce the main result of the paper and compare it with the similar results and related works. Denote $\bm\Omega_0=\bm\Theta_0^{-1}, \underline \lambda:=\lambda_{\min}(\bm\Theta_0) = \lambda_{\max}^{-1}(\bm\Omega_0) > 0$ and
\begin{equation}
\cos{\phi_0} = \frac{\Tr{\bm\Omega_0}}{\norm{\I}_F\norm{\bm\Omega_0}_F}>0.
\label{phi_def_sin}
\end{equation}

\begin{theorem}
\label{thm:main_res}
Assume we are given $n > p$ i.i.d. copies of $\x \sim \mathcal{U}(\bm\Theta_0)$, then for $\theta\geq 0$ with probability at least
\begin{align}
&1-2\exp\(\frac{-\theta^2}{2(1+1.7\frac{\theta}{\sqrt{n}})}\)\nonumber\\
&-2p^2\exp\(-\frac{n\cos^2{\phi_0}}{77\ln(7p)(1+\frac{2}{p})}\)\(1+\frac{15\cdot10^3(1+\frac{2}{p})^4}{n^2\cos^8{\phi_0}}\)
\label{thm_b}
\end{align}
Tyler's estimator (\ref{add_def_tyler}) satisfies
\begin{equation}
\norm{\T^{-1} - \bm\Theta_0^{-1}}_F \leq \theta\frac{10}{\underline\lambda \cos^2{\phi_0}}\frac{p+2}{\sqrt{n}}.
\label{tyler_bound}
\end{equation}
\end{theorem}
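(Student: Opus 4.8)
The plan is to follow the optimisation-based route of \cite{rothman2008sparse, ravikumar2011high}, working throughout with the inverse $\bm\Omega=\bm\Theta^{-1}$ and its true value $\bm\Omega_0$, and to begin by passing to coordinates in which the population is trivial. Put $\bm{\Xi}=\bm\Theta_0^{1/2}\bm\Omega\,\bm\Theta_0^{1/2}$ and $\v_i=\bm\Omega_0^{1/2}\x_i/\norm{\bm\Omega_0^{1/2}\x_i}$; by the defining property of $\mathcal U(\bm\Theta_0)$ the $\v_i$ are i.i.d.\ uniform on the unit sphere, and substituting in (\ref{neg_ll}) shows that the program (\ref{add_def_tyler}) is equivalent to minimising
$$F_n(\bm{\Xi})=-\log|\bm{\Xi}|+\frac{p}{n}\sum_{i=1}^n\log(\v_i^T\bm{\Xi}\v_i)$$
over the affine constraint $\langle\bm\Omega_0,\bm{\Xi}\rangle=\Tr{\bm\Omega_0}$, whose ``truth'' is $\bm{\Xi}_0=\I$ and whose tangent space is $\mathcal V=\bm\Omega_0^{\perp}$. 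If $\widehat{\bm{\Xi}}$ denotes the minimiser then $\T^{-1}=\bm\Omega_0^{1/2}\widehat{\bm{\Xi}}\,\bm\Omega_0^{1/2}$, so $\norm{\T^{-1}-\bm\Theta_0^{-1}}_F\le\lambda_{\max}(\bm\Omega_0)\,\norm{\widehat{\bm{\Xi}}-\I}_F=\underline{\lambda}^{-1}\norm{\widehat{\bm{\Xi}}-\I}_F$, and it therefore suffices to show $\norm{\widehat{\bm{\Xi}}-\I}_F\le r:=10\,\theta\,(\cos^2\phi_0)^{-1}(p+2)/\sqrt{n}$.

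The second step is localisation. Set $G(\Del)=F_n(\I+\Del)-F_n(\I)$ for $\Del\in\mathcal V$. Since $F_n$ is invariant under $\bm{\Xi}\mapsto c\bm{\Xi}$ one has $\langle\nabla F_n(\bm{\Xi}),\bm{\Xi}\rangle\equiv0$, so at any critical point of $F_n$ on the affine constraint the Lagrange multiplier vanishes and the point solves the fixed-point equation (\ref{tylerequ}) written for the $\v_i$; by the global optimality recalled after (\ref{neg_ll}) the only such point is $\widehat{\bm{\Xi}}$. Consequently, if $r<1$ and $G(\Del)>0$ for every $\Del\in\mathcal V$ with $\norm{\Del}_F=r$, then $G$ attains its minimum over the closed $r$-ball of $\mathcal V$ at an interior critical point, which must then be $\widehat{\bm{\Xi}}-\I$, giving $\norm{\widehat{\bm{\Xi}}-\I}_F<r$. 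To verify $G>0$ on the sphere I would Taylor-expand
$$G(\Del)=\langle\nabla F_n(\I),\Del\rangle+\int_0^1(1-s)\,\nabla^2F_n(\I+s\Del)[\Del,\Del]\,ds,$$
with $\nabla F_n(\I)=\widehat\E:=\tfrac1n\sum_{i=1}^n(p\,\v_i\v_i^T-\I)$ and $\nabla^2F_n(\bm{\Xi})[\S,\S]=\Tr{(\bm{\Xi}^{-1}\S)^2}-\tfrac pn\sum_{i=1}^n(\v_i^T\S\v_i)^2(\v_i^T\bm{\Xi}\v_i)^{-2}$. Cauchy--Schwarz on the linear term then yields
$$G(\Del)\ \ge\ -\norm{\widehat\E}_F\,r+\tfrac12\,r^2\,\underline{L},\qquad \underline{L}:=\inf\bigl\{\nabla^2F_n(\bm{\Xi})[\S,\S]:\ \bm{\Xi}\in\I+\mathcal V,\ \norm{\bm{\Xi}-\I}_F\le r,\ \S\in\mathcal V,\ \norm{\S}_F=1\bigr\},$$
so positivity on the sphere follows once $r>2\norm{\widehat\E}_F/\underline{L}$, and the theorem reduces to a score bound and a curvature bound.

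The score bound is routine: $p\,\v\v^T-\I$ is mean zero with $\norm{p\,\v\v^T-\I}_F^2=p(p-1)$ deterministically, and a Bernstein-type concentration inequality for $\norm{\widehat\E}_F=\norm{\tfrac1n\sum_i(p\,\v_i\v_i^T-\I)}_F$ yields $\norm{\widehat\E}_F\le\tfrac52\,\theta\,p/\sqrt{n}$ on the event described in the first line of (\ref{thm_b}). The curvature bound is the heart of the matter. At $\bm{\Xi}=\I$ the population Hessian is $\norm{\S}_F^2-p\,\EE{(\v^T\S\v)^2}=\bigl(p\norm{\S}_F^2-(\Tr\S)^2\bigr)/(p+2)$; restricting $\S$ to $\mathcal V=\bm\Omega_0^{\perp}$ and writing $\I=\bigl(\Tr{\bm\Omega_0}/\norm{\bm\Omega_0}_F^2\bigr)\bm\Omega_0+\W$ with $\W\perp\bm\Omega_0$ and $\norm{\W}_F^2=p(1-\cos^2\phi_0)$ --- which is precisely where the angle (\ref{phi_def_sin}) enters --- gives $(\Tr\S)^2\le p\sin^2\phi_0\,\norm{\S}_F^2$, hence population curvature $\ge\tfrac{p}{p+2}\cos^2\phi_0\,\norm{\S}_F^2$. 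It then remains to show the \emph{empirical} Hessian stays above half of this bound, uniformly over unit $\S\in\mathcal V$ and over $\bm{\Xi}$ in the $r$-ball of $\I+\mathcal V$ about $\I$; the hard part will be this uniform lower bound on the random curvature. I would establish it at $\bm{\Xi}=\I$ from a concentration inequality for $\tfrac pn\sum_i(\v_i^T\S\v_i)^2$ made uniform in $\S$ by an $\epsilon$-net of the unit sphere of $\mathcal V$ --- the source of the $2p^2$ prefactor, the $\ln(7p)$, and, through the truncation/higher-moment remainder, the $15\cdot10^3(1+2/p)^4/(n^2\cos^8\phi_0)$ correction in (\ref{thm_b}) --- and extend it to the whole ball by a deterministic Lipschitz bound on $\bm{\Xi}\mapsto\nabla^2F_n(\bm{\Xi})$, available because $\v_i^T\bm{\Xi}\v_i\ge1-r$ there. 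With $\underline{L}\ge\tfrac{p}{2(p+2)}\cos^2\phi_0$ and the score bound in hand, $2\norm{\widehat\E}_F/\underline{L}$ works out to $10\,\theta\,(\cos^2\phi_0)^{-1}(p+2)/\sqrt{n}$, so this is an admissible radius $r$, and the factor $\underline{\lambda}^{-1}$ from the change of variables finally yields (\ref{tyler_bound}).
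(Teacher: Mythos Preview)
Your plan tracks the paper's proof closely: the whitening $\bm{\Xi}=\bm\Theta_0^{1/2}\bm\Omega\,\bm\Theta_0^{1/2}$ with $\v_i\sim\mathcal U(\I)$, the localisation via positivity of the Taylor increment on a Frobenius sphere in the constraint subspace, the vector-Bernstein bound on the centred score, and the population curvature floor $\tfrac{p}{p+2}\cos^2\phi_0$ on $\mathcal V=\bm\Omega_0^\perp$ obtained from the angle (\ref{phi_def_sin}) are exactly what the paper does. The paper handles the intermediate Hessian point slightly differently---it bounds the \emph{expected} Hessian at the Lagrange point $\overline{\bm\Omega'}$ analytically (Lemma~\ref{lem:lem_3}, via Taylor expansion of the moment ratios $R^\nu$) and then concentrates the sample Hessian around it at that same shifted point (Lemma~\ref{lem:lem_2})---but your alternative of concentrating at $\I$ and Lipschitz-extending over the ball is equally viable in principle.

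The one step that would not deliver the stated bound is your proposed uniform Hessian concentration via an $\epsilon$-net of the unit sphere of $\mathcal V\subset\mathcal S(p)$. That sphere has dimension $\tfrac{p(p+1)}{2}-1$, so any covering has cardinality of order $e^{cp^2}$; a union bound over such a net cannot produce the polynomial $2p^2$ prefactor or the $\ln(7p)$ denominator in (\ref{thm_b}). The paper obtains precisely those constants from a \emph{matrix} (operator) Bernstein inequality---Minsker's version, quoted as Lemma~\ref{m_bernstein_th}---applied to the centred Hessian forms $S_i$ regarded as self-adjoint operators on $\mathcal S(p)$: the $2p^2$ prefactor, the logarithm in the exponent's denominator, and the additive $O(n^{-2}\eta^{-4})$ correction term are all intrinsic to that inequality, not to a covering argument. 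Replace the net by operator Bernstein and your argument goes through with the right constants. A minor bookkeeping note: the first line of (\ref{thm_b}) corresponds to $\norm{\widehat\E}_F\le\theta p/\sqrt n$, not $\tfrac52\theta p/\sqrt n$; the factor $10=4/(1-\tau)$ with $\tau=\tfrac35$ actually arises from the curvature loss in the concentration step (Corollary~\ref{cor_lem_3}), which your claimed $\underline L\ge\tfrac{p}{2(p+2)}\cos^2\phi_0$ silently omits.
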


We note that for the Gaussian populations the same technique provides a similar up to a constant factor bound for the SCM estimator, suggesting that both estimators are bounded by a multiple of $\frac{p}{\sqrt{n}}$ with high probability.

\subsection{Identity Covariance}
The value of $\cos{\phi_0}$ is close to $1$ if the condition number $\kappa(\bm\Theta_0)$ is close to $1$ and gets close to $0$ if the matrix $\bm\Theta_0$ has many small eigenvalues and few large ones. As we know the estimation of the matrix becomes less stable in the latter case, as the theorem suggests. To make the statement of Theorem \ref{thm:main_res} easier to grasp let us treat the case of $\bm\Theta_0 = \I$ in the following
\begin{corollary}
\label{cor:main_res}
Assume we are given $n > p$ i.i.d. copies of $\x \sim \mathcal{U}(\I), \theta < \frac{\sqrt{n}}{4}$, then
\begin{align*}
&\mathbb{P}\(\norm{\T^{-1} - \I}_F \geq 10\theta\frac{p+2}{\sqrt{n}}\) \leq 2\exp\(\frac{-\theta^2}{3}\) \\
&+2p^2\exp\(-\frac{n}{77\ln(7p)(1+\frac{2}{p})}\)\(1+\frac{15\cdot10^3(1+\frac{2}{p})^4}{n^2}\).
\end{align*}
\end{corollary}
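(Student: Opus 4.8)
The plan is to obtain Corollary~\ref{cor:main_res} directly from Theorem~\ref{thm:main_res} by specializing to the white case $\bm\Theta_0 = \I$ and then performing one elementary estimate on the first tail term, using the extra hypothesis $\theta < \sqrt{n}/4$.

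First I would evaluate the two problem-dependent constants appearing in Theorem~\ref{thm:main_res} when $\bm\Theta_0 = \I$. In this case $\bm\Omega_0 = \bm\Theta_0^{-1} = \I$, so $\underline\lambda = \lambda_{\min}(\bm\Theta_0) = 1$, and by~\eqref{phi_def_sin},
\begin{equation*}
\cos\phi_0 = \frac{\Tr{\I}}{\norm{\I}_F\,\norm{\I}_F} = \frac{p}{\sqrt{p}\cdot\sqrt{p}} = 1 ,
\end{equation*}
so that $\cos^2\phi_0 = \cos^8\phi_0 = 1$. Substituting $\underline\lambda = 1$ and $\cos\phi_0 = 1$ into~\eqref{tyler_bound} reduces the deviation bound to $\norm{\T^{-1} - \I}_F \le 10\theta(p+2)/\sqrt{n}$, and substituting into~\eqref{thm_b} reduces the success probability to
\begin{equation*}
1 - 2\exp\!\left(\frac{-\theta^2}{2(1+1.7\theta/\sqrt{n})}\right) - 2p^2\exp\!\left(-\frac{n}{77\ln(7p)(1+2/p)}\right)\!\left(1+\frac{15\cdot 10^3(1+2/p)^4}{n^2}\right) .
\end{equation*}

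Next I would coarsen the first exponential to match the form stated in the corollary. The constraint $\theta < \sqrt{n}/4$ gives $1.7\,\theta/\sqrt{n} < 1.7/4 = 0.425$, hence $2(1+1.7\theta/\sqrt{n}) < 2.85 < 3$; since $-\theta^2 \le 0$, the quantity $\exp(-\theta^2/d)$ is nondecreasing in the denominator $d > 0$, so $\exp\!\big(-\theta^2/(2(1+1.7\theta/\sqrt{n}))\big) \le \exp(-\theta^2/3)$. Inserting this into the success-probability expression and passing to the complementary event yields precisely
\begin{equation*}
\mathbb{P}\!\left(\norm{\T^{-1}-\I}_F \ge 10\theta\frac{p+2}{\sqrt{n}}\right) \le 2\exp\!\left(\frac{-\theta^2}{3}\right) + 2p^2\exp\!\left(-\frac{n}{77\ln(7p)(1+2/p)}\right)\!\left(1+\frac{15\cdot 10^3(1+2/p)^4}{n^2}\right) ,
\end{equation*}
which is the assertion of the corollary.

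I do not expect any genuine obstacle: the argument is nothing more than a substitution into Theorem~\ref{thm:main_res} followed by a one-line monotonicity estimate. The only place that warrants a moment's care is the direction of that last inequality --- because the numerator $-\theta^2$ is non-positive, replacing the true denominator $2(1+1.7\theta/\sqrt{n})$ by the larger constant $3$ makes the exponent \emph{less} negative, hence the exponential larger, so the replacement is a legitimate upper bound --- together with the observation that the hypothesis $\theta < \sqrt{n}/4$ is exactly what ensures $2(1+1.7\theta/\sqrt{n}) \le 3$.
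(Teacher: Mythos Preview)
Your proposal is correct and is exactly the intended derivation: the paper presents Corollary~\ref{cor:main_res} as an immediate specialization of Theorem~\ref{thm:main_res} to $\bm\Theta_0=\I$ (so $\underline\lambda=1$ and $\cos\phi_0=1$), and the only additional step is the coarsening of the first exponential via $\theta<\sqrt{n}/4\Rightarrow 2(1+1.7\theta/\sqrt{n})<3$, precisely as you wrote.
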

This corollary illustrates the essential behavior of the bound: as $n$ and $p$ get large the second probability term vanishes and we get a large deviation-type bound.

\subsection{Choice of the Norm}
Our result and the corresponding results of \cite{rothman2008sparse, ravikumar2011high} for the Gaussian settings are formulated in terms of the inverse of the estimated matrix. Actually the necessity to formulate the convergence rates in terms of inverse matrices is twofold. First of all, in many applications one is mostly interested in the inverse covariance matrix estimation, e.g. in regression and denoising. Secondly, this choice is also dictated by the properties of the negative log-likelihood function. It is more convenient for the analysis to parametrize this function by the inverse covariance matrix.

In addition, the fact that the Frobenius and not the spectral norm is used in the bounds also deserves explanation. In the Gaussian case stronger bounds can be obtained with the spectral norm. In fact it can be shown that the rate of convergence of the SCM matrix to the true covariance is of order $\sqrt{\frac{p}{n}}$ in the spectral norm, see \cite{vershynin2012compressed} and references therein. In our case the problem of obtaining spectral norm non-asymptotic bounds is much more involved and remains an open question.

\section{The Proof Strategy}
\label{reason}
The proof of Theorem \ref{thm:main_res} follows a technique similar to that of \cite{rothman2008sparse, bradley2012sample, ravikumar2011high}. The method of obtaining sample complexity rates proposed by these papers consists of finding the smallest ball around the true inverse covariance that contains the estimator with high probability. In its turn, this is done by considering the second order Taylor expansion of the sample average negative log-likelihood and estimating the maximal radius of the ball, for which the increase of this function on the boundary of the ball is positive with high probability.

The reasoning is as following: assume we are given a continuous function $g$ over a finite dimensional Euclidean space which is known to have a unique local minimum. If we also know that $g(\x) < g|_{\partial\mathcal{B}(\x;\rho)}$, where $\mathcal{B}(\x;\rho)$ is a closed ball of radius $\rho$ around $\x$, then the unique minumum of the function belongs to the open ball $\mathcal{B}(\x;\rho) \backslash \partial\mathcal{B}(\x;\rho)$. Indeed, $g$ is continuous over a compact set $\mathcal{B}(\x;\rho)$, thus reaches its extrema on it. The minimum of $g$ cannot lie on the boundary since $g|_{\partial\mathcal{B}(\x;\rho)} > g(\x)$, so it is strictly inside the ball.

Strong convexity of $g$ is a sufficient condition for it having a unique local minimum. For this purpose any notion of convexity is suitable, since the uniqueness of the minimum does not depend on the metric. The second condition $g(\x) < g|_{\partial\mathcal{B}(\x;\rho)}$ is usually demonstrated using strong local convexity of $g$ in the vicinity of the true parameter value. Due to the scale invariance, the Hessian of Tyler's objective has a zero eigenvalue in the vicinity of the true parameter. The trace constraint addresses this invariance and ensures that the Hessian is positive definite.

\section{Preliminary Results for the ACG Distribution}
The analysis becomes easier if the negative log-likelihood is parametrized by the inverse shape matrix. We denote $\bm\Omega = \bm\Theta^{-1}$ and write
\begin{equation}
f(\bm\Omega;\x) = \widetilde{f}(\bm\Omega^{-1};\x) = -\log|\bm\Omega| + p\log(\x^T  \bm\Omega \x).
\label{log_lik_n}
\end{equation}
Slightly abusing the notations, below we refer to (\ref{log_lik_n}) as the negative log-likelihood of the ACG population.

\subsection{Derivatives and Their Expectations}
The negative log-likelihood gradient and Hessian read as
\begin{equation*}
\nabla f_{\bm\Omega} = - \bm\Omega^{-1} + p\frac{\x \x^T}{\x^T  \bm\Omega \x},
\end{equation*}
\begin{equation}
\nabla^2 f_{\bm\Omega} = \bm\Omega^{-1} \otimes \bm\Omega^{-1} - p\frac{\x \x^T}{\x^T \bm\Omega \x} \otimes \frac{\x \x^T}{\x^T \bm\Omega \x}.
\label{raw_hes}
\end{equation}
These and their expectations can be considered as linear and quadratic forms over $\mathcal{S}(p)$ respectively:
\begin{equation}
\nabla f_{\bm\Omega}(\U) = - \Tr{\bm\Omega^{-1}\U} + p\frac{\x^T \U \x}{\x^T  \bm\Omega \x},
\label{der_log_lik}
\end{equation}
\begin{align}
&\nabla^2 f_{\bm\Omega}(\U) =  \Tr{\bm\Omega^{-1}\U \bm\Omega^{-1}\U} - p\(\frac{\x^T \U \x}{\x^T  \bm\Omega \x}\)^2 \nonumber\\
&= \norm{\bm\Omega^{-1/2}\U\bm\Omega^{-1/2}}_F^2 - p\(\frac{\x^T \U \x}{\x^T  \bm\Omega \x}\)^2.
\label{hes_log_lik}
\end{align}
Let us compute the expectations of the derivatives for $\x \sim \mathcal{U}(\bm\Theta_0)$. Denote
\begin{equation*}
T_{\bm\Omega} = \mathbb{E}\(\nabla f_{\bm\Omega}\), H_{\bm\Omega} = \mathbb{E}\( \nabla^2 f_{\bm\Omega}\),
\end{equation*}
\begin{equation}
R^\nu(\U,\bm\Omega;\bm\Theta_0) = \mathbb{E}\[\(\frac{\x^T\U \x}{\x^T\bm\Omega \x}\)^\nu\], \nu \in \N,
\label{r_def}
\end{equation}
then
\begin{equation*}
T_{\bm\Omega}(\U) = - \Tr{\bm\Omega^{-1}\U} + p R^1(\U,\bm\Omega;\bm\Theta_0),
\end{equation*}
\begin{equation}
H_{\bm\Omega}(\U) =  \Tr{(\bm\Omega^{-1}\U)^2} - p R^2(\U,\bm\Omega;\bm\Theta_0).
\label{exp_hes}
\end{equation}
In particular, at the true parameter value $\bm\Omega_0$ we obtain
\begin{equation*}
T_{\bm\Omega_0}(\U) = 0, \forall \U \in \mathcal{S}(p),
\end{equation*}
\begin{align*}
&H_{\bm\Omega_0}(\U) = \frac{p\Tr{(\bm\Theta_0\U)^2} - \(\Tr{\bm\Theta_0\U}\)^2}{p+2} \\
&= \frac{p\norm{\bm\Theta_0^{1/2}\U\bm\Theta_0^{1/2}}_F^2 - \(\Tr{\bm\Theta_0^{1/2}\U\bm\Theta_0^{1/2}}\)^2}{p+2}.
\end{align*}

These formulas follow directly from (\ref{der_log_lik}), (\ref{hes_log_lik}) and formulas (\ref{fir_r_mom}), (\ref{sec_r_mom}) from Appendix \ref{f_app}.  Our analysis is based on strong convexity of the Hessian. It can be easily shown that $H_{\bm\Omega_0}(\U)$ has a one-dimensional kernel spanned by $\bm\Omega_0$. In what follows we show that the trace constraint in (\ref{add_def_tyler}) eliminates this direction and makes the Hessian strongly convex in the vicinity of $\bm\Omega_0$ with high probability.

In order to explore the convexity properties of $H_{\bm\Omega_0}(\U)$ let us state the following simple
\begin{lemma}
\label{pyt_lem}
Let $\mathcal{V}$ be a Euclidean space and $\mathcal{S} \subset \mathcal{V}$ be its subspace of codimension one with normal vector $\n$, then for any $\v \in \mathcal{V}$
\begin{equation*}
\sin^2(\angle \u,\v) \geq \cos^2(\angle\n,\v) = \frac{(\n,\v)}{\norm{\n}\norm{\v}}, \forall \u \in \mathcal{S}.
\end{equation*}
\end{lemma}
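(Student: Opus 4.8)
The statement to prove is Lemma \ref{pyt_lem}: for a codimension-one subspace $\mathcal{S}$ with unit normal $\n$, every $\u \in \mathcal{S}$ satisfies $\sin^2(\angle \u,\v) \geq \cos^2(\angle \n, \v)$ for any $\v \in \mathcal{V}$. The plan is to reduce everything to an orthogonal decomposition of $\v$ along $\n$ and within $\mathcal{S}$, and then to observe that the angle between $\v$ and any vector of $\mathcal{S}$ is minimized (hence its sine is smallest, on the acute branch) by the orthogonal projection of $\v$ onto $\mathcal{S}$.

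First I would write $\v = \v_{\parallel} + \v_{\perp}$, where $\v_{\perp} = (\n,\v)\,\n/\norm{\n}^2$ is the component along $\n$ and $\v_{\parallel} = \v - \v_{\perp} \in \mathcal{S}$ is the orthogonal projection of $\v$ onto $\mathcal{S}$, so that $\norm{\v}^2 = \norm{\v_{\parallel}}^2 + \norm{\v_{\perp}}^2$ and $\cos^2(\angle \n,\v) = \norm{\v_{\perp}}^2/\norm{\v}^2$. Next, for any $\u \in \mathcal{S}$, since $\v_{\perp} \perp \u$ we have $(\u,\v) = (\u,\v_{\parallel})$, so by Cauchy--Schwarz $\cos^2(\angle \u,\v) = (\u,\v)^2/(\norm{\u}^2\norm{\v}^2) = (\u,\v_{\parallel})^2/(\norm{\u}^2\norm{\v}^2) \leq \norm{\v_{\parallel}}^2/\norm{\v}^2$. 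Therefore
\begin{equation*}
\sin^2(\angle \u,\v) = 1 - \cos^2(\angle \u,\v) \geq 1 - \frac{\norm{\v_{\parallel}}^2}{\norm{\v}^2} = \frac{\norm{\v_{\perp}}^2}{\norm{\v}^2} = \cos^2(\angle \n,\v),
\end{equation*}
which is exactly the claimed inequality. The degenerate cases ($\v = \0$, or $\u = \0$) can be dispatched by the usual convention that the corresponding angles, and hence the inequality, are interpreted as vacuous or trivially true.

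There is really no serious obstacle here: the only thing to be careful about is the bookkeeping of which quantities are squared (so that the sign of $(\n,\v)$ and of $(\u,\v)$ is irrelevant) and the edge cases of zero vectors. I would also note in passing that the displayed right-hand side in the lemma statement, $\frac{(\n,\v)}{\norm{\n}\norm{\v}}$, should be read as $\cos^2(\angle \n,\v) = \frac{(\n,\v)^2}{\norm{\n}^2\norm{\v}^2}$ (i.e. the squared cosine), which is what the proof above delivers; this is the form in which the lemma will be applied to the Hessian $H_{\bm\Omega_0}$ with $\n = \bm\Omega_0$ and $\mathcal{S}$ the tangent space of the trace constraint.
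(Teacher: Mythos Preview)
Your proof is correct and is essentially the same argument as the paper's, just written out more explicitly: the paper observes that the angle-minimizing $\u\in\mathcal{S}$ is coplanar with $\n$ and $\v$ (which is precisely your projection $\v_\parallel$) and then invokes Pythagoras, whereas you obtain the same extremum via Cauchy--Schwarz and the orthogonal decomposition $\norm{\v}^2=\norm{\v_\parallel}^2+\norm{\v_\perp}^2$. Your remark that the displayed right-hand side should be the squared cosine is also correct.
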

\begin{proof}
Among all the vectors $\u\in \mathcal{S}$ the one that minimizes the angle $(\angle \u,\v)$ is coplanar with $\n$ and $\v$. Now the statement follows from Pythagoras theorem.
\end{proof}
Let us now turn to the space $\mathcal{S}(p)$ and consider its subspace $\mathcal{L}$ defined by the condition $\Tr{\U} = 0$. Apply to this subspace the following linear transformation:
\begin{equation}
\mathcal{L'} = \bm\Theta_0^{1/2}\mathcal{L}\bm\Theta_0^{1/2}.
\label{l_map}
\end{equation}
$\bm\Omega_0$ is a normal vector of $\mathcal{L'}$, since $(\bm\Theta_0^{1/2}\U\bm\Theta_0^{1/2},\bm\Omega_0) = \Tr{\U} = 0, \forall \U \in \mathcal{L}$. Recall that
\begin{align*}
&\Tr{\bm\Theta_0^{1/2}\U\bm\Theta_0^{1/2}} = (\bm\Theta_0^{1/2}\U\bm\Theta_0^{1/2},\I) \\ &=\sqrt{p}\norm{\bm\Theta_0^{1/2}\U\bm\Theta_0^{1/2}}_F \cos(\angle\bm\Theta_0^{1/2}\U\bm\Theta_0^{1/2},\I),
\end{align*}
and set $\n = \bm\Omega_0, \v = \I$ to apply Lemma \ref{pyt_lem} and get
\begin{align*}
&H_{\bm\Omega_0}(\U) = \frac{p}{p+2}\sin^2(\angle\bm\Theta_0^{1/2}\U\bm\Theta_0^{1/2},\I)\norm{\bm\Theta_0^{1/2}\U\bm\Theta_0^{1/2}}_F^2 \\
&\geq \frac{p}{p+2}\cos^2(\angle\I,\bm\Omega_0)\norm{\bm\Theta_0^{1/2}\U\bm\Theta_0^{1/2}}_F^2, \forall \U \in \mathcal{L}.
\end{align*}
For brevity denote
\begin{equation*}
\cos{\phi_0} = \cos(\angle\I,\bm\Omega_0) = \frac{\Tr{\bm\Omega_0}}{\norm{\I}_F\norm{\bm\Omega_0}_F} = \frac{\Tr{\bm\Omega_0}}{\sqrt{p}\norm{\bm\Omega_0}_F} \geq \frac{1}{\kappa(\bm\Theta_0)},
\end{equation*}
as was already defined in (\ref{phi_def_sin}). This quantity is closely related to the notion of sphericity, \cite{ledoit2002some}. We see that $\cos{\phi_0}$ defines the convexity properties of $H_{\bm\Omega_0}(\U)$ restricted to $\mathcal{L}$ and thus plays a crucial role in the bound provided by Theorem \ref{thm:main_res}.

\section{Proof of the Main Theorem}
Denote
\begin{equation*}
\widehat{f_{\bm\Omega}} = \frac{1}{n}\sum_{i=1}^n f(\bm\Omega;\x_i),
\end{equation*}
and let us use the Taylor polynomial formula with remainder in the Lagrange form. The expansion in the vicinity of the true inverse covariance matrix $\bm\Omega_0$ reads as:
\begin{equation}
\widehat{f_{\bm\Omega}} - \widehat{f_{\bm\Omega_0}} = \nabla\widehat{f_{\bm\Omega_0}}(\Delta\bm\Omega) +\frac{1}{2}\nabla^2 \widehat{f_{\overline{\bm\Omega}}}(\Delta\bm\Omega),
\label{taylor_expansion}
\end{equation}
where $\bm\Omega = \bm\Omega_0 + \Delta\bm\Omega, \overline{\bm\Omega} = \bm\Omega_0 + \alpha \Delta\bm\Omega, \alpha \in [0,1]$ and $\Delta\bm\Omega \in \mathcal{L}$. The sample average negative log-likelihood function (\ref{taylor_expansion}) is zero for $\Delta\bm\Omega = 0$ and we want to show that with high probability it is positive on the boundary of some ball.

Note that in spite of the fact the negative log-likelihood (\ref{log_lik_n}) is not globally convex in $\bm\Omega$, it is geodesically convex as shown in \cite{wiesel2012geodesic} and thus has a unique global minimizer no matter what metric over $\mathcal{P}(p)$ is considered. This justifies application of the technique described in section \ref{reason}.

In order to make the proof more transparent and instructive, and the notations more concise let us note that the distributions of the functions in both sides of (\ref{taylor_expansion}) are invariant under the linear map $'$ defined in (\ref{l_map})
\begin{equation*}
\mathcal{L'} = \bm\Theta_0^{1/2}\mathcal{L}\bm\Theta_0^{1/2},
\end{equation*}
applied to matrices $\bm\Omega_0, \Delta\bm\Omega$ followed by a change of random vectors from $\x_i \sim \mathcal{U}(\bm\Theta_0)$ to $\x_i' \sim \mathcal{U}(\I)$ for $i=1,\dots,n$. This linear transformation relocates the domain under consideration from the vicinity of $\bm\Omega_0$ into a vicinity of $\I$ and (\ref{taylor_expansion}) reads as
\begin{equation*}
\widehat{f_{\bm\Omega'}} - \widehat{f_{\I}} = \nabla\widehat{f_{\I}}(\Delta\bm\Omega') +\frac{1}{2}\nabla^2 \widehat{f_{\overline{\bm\Omega'}}}(\Delta\bm\Omega'),
\end{equation*}
where $\bm\Omega' = \I + \Delta\bm\Omega', \overline{\bm\Omega'} = \I + \alpha \Delta\bm\Omega', \Delta\bm\Omega' \in \mathcal{L'}, \alpha \in [0,1]$. Since $'$ is a tension-compression map, the maximal and minimal distance changes are known and we can use an inequality
\begin{equation}
\underline \lambda \norm{\T^{-1} - \bm\Theta_0^{-1}}_F \leq \norm{\T'^{-1} - \bm\Theta_0'^{-1}}_F
\label{lamb_b}
\end{equation}
to establish high probability bounds for inverse Tyler's estimator given the respective bounds on its $'$-image.

\subsection{Gradient and Hessian Bounds}
\begin{lemma} \label{lem:lem_1}
Uniformly over $\U' \in \mathcal{L'}$
\begin{equation*}
\mathbb{P}\(\left|\nabla \widehat{f_{\I}}(\U')\right| \geq t p\norm{\U'}_F\) \leq 2\exp\(\frac{-nt^2}{2(1+1.7t)}\), \forall t \geq 0.
\end{equation*}
\end{lemma}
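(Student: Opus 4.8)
The plan is to write $\nabla\widehat{f_{\I}}(\U') = \frac{1}{n}\sum_{i=1}^n \nabla f_{\I}(\U';\x_i')$ explicitly using \eqref{der_log_lik} with $\bm\Omega = \I$, which for $\x_i'$ on the unit sphere gives $\nabla f_{\I}(\U';\x_i') = -\Tr{\U'} + p\,\x_i'^T\U'\x_i'$. Since $\U'\in\mathcal{L'}$ need not satisfy $\Tr{\U'}=0$ (it is $\mathcal{L}$ that is traceless, and $'$ is not trace-preserving), I first note that $\Tr{\U'} = (\U',\I)$ and that the normalized increments $\x_i'$ satisfy $\mathbb{E}[\x_i'\x_i'^T] = \frac{1}{p}\I$, so $\mathbb{E}[\x_i'^T\U'\x_i'] = \frac{1}{p}\Tr{\U'}$ and hence $\mathbb{E}[\nabla f_{\I}(\U';\x_i')] = 0$ — consistent with $T_{\bm\Omega_0}(\U)=0$. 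Thus $\nabla\widehat{f_{\I}}(\U')$ is a centered average of i.i.d. bounded random variables $Y_i := p\,\x_i'^T\U'\x_i' - \Tr{\U'}$.

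Next I would control the tails of $Y_i$. Writing $\x_i' = \u/\norm{\u}$ for $\u\sim\mathcal{N}(0,\I)$, the quantity $\x_i'^T\U'\x_i'$ lies in $[\lambda_{\min}(\U'),\lambda_{\max}(\U')]$, so $|Y_i| \le 2p\norm{\U'}_2 \le 2p\norm{\U'}_F$ almost surely; more importantly, I want a sub-exponential / Bernstein-type bound, so I would estimate the variance $\mathrm{Var}(Y_i) = p^2\,\mathrm{Var}(\x_i'^T\U'\x_i')$. Using the second-moment formula for the ACG / uniform-on-sphere distribution (equation \eqref{sec_r_mom} in Appendix~\ref{f_app}, specialized to $\bm\Omega_0=\I$), one gets $\mathbb{E}[(\x_i'^T\U'\x_i')^2] = \frac{\Tr{\U'}^2 + 2\norm{\U'}_F^2}{p(p+2)}$, whence $\mathrm{Var}(Y_i) = \frac{p}{p+2}\bigl(2\norm{\U'}_F^2 - \tfrac{2}{p}\Tr{\U'}^2\bigr) \le 2\norm{\U'}_F^2$. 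With the a.s. bound $|Y_i|\le 2p\norm{\U'}_F$ and variance proxy $\le 2\norm{\U'}_F^2$ in hand, Bernstein's inequality for bounded i.i.d. variables gives, for the average $\frac1n\sum Y_i$,
\begin{equation*}
\mathbb{P}\bigl(|\tfrac1n\textstyle\sum_i Y_i| \ge s\bigr) \le 2\exp\Bigl(\frac{-n s^2}{2(2\norm{\U'}_F^2 + \tfrac{2}{3}\cdot 2p\norm{\U'}_F\, s)}\Bigr).
\end{equation*}
Setting $s = t p\norm{\U'}_F$ and simplifying the denominator — the $\norm{\U'}_F^2$ cancels, leaving $2\bigl(2 + \tfrac{4}{3}t\bigr)\cdot\frac{1}{p^2}\cdot\text{(something)}$; tracking the $p$-factors carefully, the $p^2$ in the numerator $n s^2 = n t^2 p^2 \norm{\U'}_F^2$ meets the $p$ from the linear term, and one obtains a bound of the form $2\exp\bigl(\frac{-nt^2}{2(1+ct)}\bigr)$ for an absolute constant $c$ — the claim asserts $c = 1.7$, which should come out of a slightly sharper constant in the Bernstein bound (using the sub-exponential moment bound $\mathbb{E}|Y_i|^k \le \tfrac{k!}{2}v\,b^{k-2}$ rather than the crude version). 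The word ``uniformly over $\U'\in\mathcal{L'}$'' here means the bound holds for each fixed $\U'$ with constants independent of $\U'$ (the $\U'$-dependence being entirely absorbed into the normalization $\norm{\U'}_F$ by homogeneity); no union bound over $\U'$ is needed at this stage.

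The main obstacle is getting the numerical constant $1.7$ and the exact shape $2(1+1.7t)$ in the denominator right: this requires not the textbook Bernstein inequality but a careful bound on the exponential moment $\mathbb{E}\exp(\tau Y_i)$, exploiting that $Y_i$ is a centered, bounded, and (by the variance computation above) well-concentrated function of a Gaussian vector normalized to the sphere. I would handle this by bounding the centered moments $\mathbb{E}|Y_i - \mathbb{E}Y_i|^k$ via the boundedness $|Y_i|\le 2p\norm{\U'}_F$ combined with the variance bound, i.e. $\mathbb{E}|Y_i|^k \le (2p\norm{\U'}_F)^{k-2}\cdot 2\norm{\U'}_F^2$, plug into the series for the moment generating function, optimize over the Chernoff parameter $\tau$, and substitute $s = tp\norm{\U'}_F$; the factor $1.7 \approx \tfrac{1}{3}(2p\norm{\U'}_F)/(p\norm{\U'}_F)\cdot(\text{const})$ should emerge from balancing the linear-in-$\tau$ term against the quadratic one. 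This is the only genuinely delicate part; everything else is the moment computation from Appendix~\ref{f_app} plus a standard Chernoff argument.
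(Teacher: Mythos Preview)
Your approach is sound in outline but differs from the paper's in a way that matters, and it misses the two observations that make the paper's proof a two-liner.

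First, the paper does not apply Bernstein to the scalar $Y_i(\U')=p\,\x_i'^T\U'\x_i'-\Tr{\U'}$ for a fixed $\U'$. Instead it writes
\[
\nabla\widehat{f_{\I}}(\U')=\Tr{\bigl(p\,\widehat{\x'\x'^T}-\I\bigr)\U'}
\]
and applies Cauchy--Schwarz, so that it suffices to control the single random quantity $\norm{p\,\widehat{\x'\x'^T}-\I}_F$ (Lemma~\ref{large_dev_lin}). On the complement of one event, the bound $|\nabla\widehat{f_{\I}}(\U')|\le tp\norm{\U'}_F$ holds for \emph{every} $\U'$ simultaneously. This is what ``uniformly over $\U'\in\mathcal{L'}$'' is meant to convey, and it is genuinely needed later: in the main theorem one must have $\widehat{f_{\bm\Omega'}}-\widehat{f_{\I}}>0$ for \emph{all} $\Delta\bm\Omega'$ on the boundary of a ball, not a single one. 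Your pointwise reading (``for each fixed $\U'$, same constants'') would leave a gap there unless you patched it with a covering argument---unnecessary once you see the Cauchy--Schwarz reduction.

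Second, the constant $1.7$ is not the outcome of ``careful moment balancing'': it drops out immediately because for $\x_i'$ on the unit sphere the centered rank-one matrix has a \emph{deterministic} Frobenius norm,
\[
\norm{\x_i'\x_i'^T-\tfrac{1}{p}\I}_F^2=1-\tfrac{1}{p},
\]
so in the vector Bernstein inequality (Lemma~\ref{v_bernstein_th}) one may take $\sigma=L=1$ outright, with no variance computation. Your almost-sure bound $|Y_i|\le 2p\norm{\U'}_F$ is looser than what Cauchy--Schwarz gives, namely $|Y_i|\le\sqrt{p(p-1)}\,\norm{\U'}_F<p\norm{\U'}_F$; with the sharper bound your scalar argument would collapse into the paper's and yield $1.7$ as well, but as written it would produce a worse constant.
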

\begin{proof}
Is provided in Appendix \ref{conc_bounds}.
\end{proof}

We proceed by establishing concentration properties of the sample mean Hessian restricted to $\mathcal{L'}$ at the point $\bm\Omega'$.
\begin{lemma}\label{lem:lem_2}
Let $\bm\Omega' = \I + \Delta \bm\Omega',\: \Delta \bm\Omega' \in \mathcal{L'}$ and $\varepsilon = \norm{\Delta \bm\Omega'}_2 < 1$, then for $\frac{1}{2\sqrt{2}}\frac{1+1/p^2}{\ln(32\sqrt{2}p^2)} < \eta < \frac{1}{2\sqrt{2}}$ uniformly over $\U' \in \mathcal{L'}$
\begin{align*}
&\mathbb{P}\(\nabla^2 \widehat{f_{\bm\Omega'}}(\U') \leq H_{\bm\Omega'}(\U') - \eta\frac{\sqrt{2}\norm{\U'}_F^2}{(1-\varepsilon)^2} \) \\
& \leq 2p^2\exp\(-\frac{n\eta\sqrt{2}}{8\ln(32\sqrt{2}p^2)}\)\(1+\frac{7}{n^2\eta^4}\).
\end{align*}
\end{lemma}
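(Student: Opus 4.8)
The plan is to prove Lemma \ref{lem:lem_2} by expressing the deviation $\nabla^2\widehat{f_{\bm\Omega'}}(\U') - H_{\bm\Omega'}(\U')$ as a sum of i.i.d.\ centered random variables and applying a matrix/scalar concentration argument uniformly over the unit sphere of $\mathcal{L'}$. Recall from (\ref{hes_log_lik}) that the sample Hessian, evaluated at $\bm\Omega'$ and direction $\U'$, is
\begin{equation*}
\nabla^2\widehat{f_{\bm\Omega'}}(\U') = \norm{\bm\Omega'^{-1/2}\U'\bm\Omega'^{-1/2}}_F^2 - \frac{p}{n}\sum_{i=1}^n\(\frac{\x_i'^T\U'\x_i'}{\x_i'^T\bm\Omega'\x_i'}\)^2 ,
\end{equation*}
so only the second, random term fluctuates. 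Writing $Y_i = \frac{\x_i'^T\U'\x_i'}{\x_i'^T\bm\Omega'\x_i'}$, the quantity to control is $\frac{p}{n}\sum_i(Y_i^2 - \EE{Y_i^2})$, and we need a one-sided lower deviation bound. The factor $(1-\varepsilon)^{-2}$ in the statement comes from $\x_i'^T\bm\Omega'\x_i' \geq (1-\varepsilon)\norm{\x_i'}^2$, which lets us compare against the whitened case $\bm\Omega' = \I$; similarly $\norm{\bm\Omega'^{-1/2}\U'\bm\Omega'^{-1/2}}_F^2 \le \norm{\U'}_F^2/(1-\varepsilon)^2$ handles the deterministic term. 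Since both are homogeneous of degree two in $\U'$, it suffices to treat $\norm{\U'}_F = 1$.

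First I would reduce to a pointwise concentration bound: for a fixed $\U'$ with $\norm{\U'}_F=1$, the variables $Z_i := p Y_i^2$ are i.i.d., bounded (because $|Y_i| = |\x_i'^T\U'\x_i'|/\norm{\x_i'}^2 \le \norm{\U'}_2 \le \norm{\U'}_F = 1$ after reduction, so $Z_i \in [0,p]$), hence sub-gamma, and a Bernstein-type inequality yields $\mathbb{P}(\frac1n\sum_i(Z_i - \EE{Z_i}) \le -s) \le \exp(-ns^2/(2(v + ps/3)))$ for a variance proxy $v$; plugging in $s \sim \eta\sqrt{2}$ and crudely bounding $v$ gives the leading exponential factor, with the $\ln(32\sqrt2 p^2)$ denominator arising from the boundedness range $p$ after optimizing the Bernstein trade-off. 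Second, to make the bound \emph{uniform} over $\U' \in \mathcal{L'}$, I would place a $\delta$-net $\mathcal{N}$ on the unit sphere of $\mathcal{L'}$ (dimension at most $p^2$, so $|\mathcal{N}| \le (3/\delta)^{p^2}$), apply the pointwise bound and a union bound over $\mathcal{N}$, and then control the Lipschitz constant of $\U' \mapsto \nabla^2\widehat{f_{\bm\Omega'}}(\U') - H_{\bm\Omega'}(\U')$ in $\U'$ to pass from the net to all of $\mathcal{L'}$. The map is quadratic in $\U'$, so on the unit ball its gradient is bounded by an absolute multiple of $p/(1-\varepsilon)^2$ times a quantity that concentrates; choosing $\delta$ polynomially small in $p$ and $n$ makes the net-to-sphere error negligible while inflating the union-bound cardinality only by a factor absorbed into $\ln(\cdot)$, and the lower-order correction $(1 + 7/(n^2\eta^4))$ is exactly the kind of term that appears when one bounds the probability that the Lipschitz constant itself is large (a second concentration event on the empirical fourth-moment-type statistic).

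The third ingredient is identifying the constants precisely. The ACG fourth-moment formulas referenced as (\ref{fir_r_mom})–(\ref{sec_r_mom}) give $\EE{Y_i^2}$ and $\EE{Y_i^4}$ in closed form for $\x_i' \sim \mathcal{U}(\I)$, which pins down the variance proxy $v$ and hence the $8$ in $8\ln(32\sqrt2 p^2)$ and the $\sqrt2$ scaling; the $1+1/p^2$ in the lower bound on the admissible range of $\eta$ is precisely the threshold below which the Bernstein exponent degenerates relative to the net cardinality $p^2$. The main obstacle I expect is not any single concentration inequality but the \emph{bookkeeping of constants through the net argument}: one must simultaneously (i) keep the sub-gamma parameters sharp enough that the exponent is $n\eta\sqrt2/(8\ln(32\sqrt2p^2))$ and not something worse, (ii) choose $\delta$ so that the net term and the Lipschitz-fluctuation term are each at most a constant multiple of the target bound, and (iii) verify that the event on which the empirical Lipschitz constant is controlled has probability at least $1 - 2p^2\exp(\cdots)\cdot 7/(n^2\eta^4)$, so that the two failure modes combine into the stated expression. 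A secondary subtlety is that $\overline{\bm\Omega'} = \I + \alpha\Delta\bm\Omega'$ with $\alpha \in [0,1]$ unknown, so the bound must hold uniformly in $\varepsilon' = \norm{\alpha\Delta\bm\Omega'}_2 \le \varepsilon$; since the right-hand side is monotone in $\varepsilon$ this is automatic, but it must be stated carefully.
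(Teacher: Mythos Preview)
Your proposal diverges from the paper's argument at the decisive step, and the substitute you propose does not close.

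The paper does \emph{not} use a scalar Bernstein bound plus an $\varepsilon$-net.  It observes that
\[
S_i \;=\; \nabla^2 f_{\bm\Omega'}(\,\cdot\,;\x_i') - H_{\bm\Omega'}
\;=\; p\Bigl(R^2(\,\cdot\,,\bm\Omega';\I) - \tfrac{\x_i'\x_i'^T}{\x_i'^T\bm\Omega'\x_i'}\otimes\tfrac{\x_i'\x_i'^T}{\x_i'^T\bm\Omega'\x_i'}\Bigr)
\]
are i.i.d.\ centered random \emph{operators} (symmetric quadratic forms on $\mathcal{L}'$), and applies Minsker's matrix Bernstein inequality (Lemma~\ref{m_bernstein_th}) directly to $\widehat S$.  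Because matrix Bernstein controls $\lambda_{\max}(-\widehat S)$, the bound is automatically uniform over all directions $\U'\in\mathcal{L}'$; no covering is needed.  Every feature of the final expression---the prefactor $2p^2$, the denominator $8\ln(32\sqrt2\,p^2)$, and the additive correction $1+7/(n^2\eta^4)$---is read off verbatim from Minsker's inequality after one sets $\sigma=\sqrt2\,L=\sqrt2/(1-\varepsilon)^2$ from the moment estimate $\bigl|\mathbb{E}[S_1(\U')^r]\bigr|\le \tfrac{r!}{2}\cdot 2\,\|\U'\|_F^{2r}/(1-\varepsilon)^{2r}$.

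Your net argument cannot reproduce this.  The unit sphere of $\mathcal{L}'$ has dimension of order $p^2$, so any $\delta$-net has cardinality $(C/\delta)^{cp^2}$, and the union bound multiplies the pointwise tail by $\exp\!\bigl(cp^2\ln(C/\delta)\bigr)$.  For this to be killed by the pointwise Bernstein exponent you would need $n\eta\gtrsim p^{2}$ (or worse), a hypothesis the lemma does not carry.  In particular, $\ln(32\sqrt2\,p^2)\asymp\ln p$ in the denominator is far too small to ``absorb'' a term of order $p^2$; that logarithm is an artifact of Minsker's trace-MGF construction, not of any covering.  Likewise, the factor $1+7/(n^2\eta^4)$ is part of Minsker's statement, not the probability that an empirical Lipschitz constant blows up.

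A minor point: your remark about $\overline{\bm\Omega'}=\I+\alpha\Delta\bm\Omega'$ with unknown $\alpha\in[0,1]$ belongs to Lemma~\ref{lem:lem_3} and Corollary~\ref{cor_lem_3}; in Lemma~\ref{lem:lem_2} the point $\bm\Omega'$ is fixed.
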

\begin{proof}
Is provided in Appendix \ref{conc_bounds}.
\end{proof}

This lemma shows that with probability depending on the parameters of the problem the sample average Hessian is not far from the expected Hessian at the same point. The next result shows the expected Hessian at a point $\overline{\bm\Omega'}$ close to the true parameter is lower bounded by a fraction of the expected Hessian at the true parameter value.
\begin{lemma} \label{lem:lem_3}
For $\overline{\bm\Omega'} = \I + \alpha \Delta\bm\Omega', \alpha \in [0,1],\: \Delta\bm\Omega' \in \mathcal{L'}, \: \varepsilon=\norm{\Delta\bm\Omega'}_2 \leq \frac{p}{6(p+2)}\cos^2{\phi_0}$ the expected Hessian $H_{\overline{\bm\Omega'}}(\Delta\bm\Omega')$ is bounded from below by
\begin{equation*}
H_{\overline{\bm\Omega'}}(\Delta\bm\Omega') \geq \frac{p}{2(p+2)}\cos^2{\phi_0}\norm{\Delta\bm\Omega'}_F^2.
\end{equation*}
\end{lemma}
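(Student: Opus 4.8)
The plan is to express $H_{\overline{\bm\Omega'}}(\Delta\bm\Omega')$ via the formula \eqref{exp_hes} specialized to the whitened setting $\bm\Theta_0 = \I$, and to compare it term-by-term with the known lower bound for $H_{\I}$ restricted to $\mathcal{L'}$ derived just before this section. Writing $\bm\Omega = \overline{\bm\Omega'} = \I + \alpha\Delta\bm\Omega'$ and $\U = \Delta\bm\Omega' \in \mathcal{L'}$, we have by \eqref{exp_hes}
\begin{equation*}
H_{\bm\Omega}(\U) = \Tr{(\bm\Omega^{-1}\U)^2} - p\,R^2(\U,\bm\Omega;\I).
\end{equation*}
First I would handle the leading term: since $\varepsilon = \norm{\Delta\bm\Omega'}_2 \leq \alpha^{-1}\norm{\overline{\bm\Omega'}-\I}_2$ is small, $\bm\Omega^{-1}$ is close to $\I$, and a Neumann-series / eigenvalue estimate gives $\Tr{(\bm\Omega^{-1}\U)^2} \geq (1-\alpha\varepsilon)^{-4}\,$ no — more carefully, $\lambda_{\min}(\bm\Omega^{-1}) \geq (1+\varepsilon)^{-1}$, so $\Tr{(\bm\Omega^{-1}\U)^2} \geq (1+\varepsilon)^{-2}\norm{\U}_F^2$, and similarly one controls it from above by $(1-\varepsilon)^{-2}\norm{\U}_F^2$. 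The second term needs an upper bound on $R^2(\U,\bm\Omega;\I) = \mathbb{E}[(\x^T\U\x / \x^T\bm\Omega\x)^2]$ for $\x \sim \mathcal{U}(\I)$; here $\x^T\bm\Omega\x \geq 1-\varepsilon$ deterministically on the sphere, so $R^2(\U,\bm\Omega;\I) \leq (1-\varepsilon)^{-2}\,\mathbb{E}[(\x^T\U\x)^2] = (1-\varepsilon)^{-2}R^2(\U,\I;\I)$. Hence
\begin{equation*}
H_{\bm\Omega}(\U) \geq (1+\varepsilon)^{-2}\norm{\U}_F^2 - (1-\varepsilon)^{-2}\, p\,R^2(\U,\I;\I).
\end{equation*}

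Next I would relate this to $H_{\I}(\U)$. From \eqref{exp_hes} at $\bm\Omega_0 = \I$, $H_{\I}(\U) = \norm{\U}_F^2 - p\,R^2(\U,\I;\I)$, so $p\,R^2(\U,\I;\I) = \norm{\U}_F^2 - H_{\I}(\U)$. Substituting,
\begin{equation*}
H_{\bm\Omega}(\U) \geq \Bigl((1+\varepsilon)^{-2} - (1-\varepsilon)^{-2}\Bigr)\norm{\U}_F^2 + (1-\varepsilon)^{-2}H_{\I}(\U).
\end{equation*}
Now invoke the Lemma \ref{pyt_lem} consequence derived in the previous section, which in the whitened coordinates reads $H_{\I}(\U) \geq \frac{p}{p+2}\cos^2\phi_0\,\norm{\U}_F^2$ for $\U \in \mathcal{L'}$. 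Using $(1-\varepsilon)^{-2} \geq 1$ on the good (positive) term and bounding the negative prefactor $(1-\varepsilon)^{-2}-(1+\varepsilon)^{-2} = \frac{4\varepsilon}{(1-\varepsilon^2)^2}$ from above by something linear in $\varepsilon$ for $\varepsilon$ small, we obtain
\begin{equation*}
H_{\bm\Omega}(\U) \geq \Bigl(\frac{p}{p+2}\cos^2\phi_0 - \frac{4\varepsilon}{(1-\varepsilon^2)^2}\Bigr)\norm{\U}_F^2.
\end{equation*}
Finally, the hypothesis $\varepsilon \leq \frac{p}{6(p+2)}\cos^2\phi_0$ is exactly calibrated so that the subtracted term is at most half of $\frac{p}{p+2}\cos^2\phi_0$: since $\cos^2\phi_0 \leq 1$ forces $\varepsilon \leq \tfrac16$, one checks $\frac{4\varepsilon}{(1-\varepsilon^2)^2} \leq \frac{4\varepsilon}{(1-1/36)^2} \leq \frac{24}{5}\varepsilon \cdot(\text{const}) \leq \frac{p}{2(p+2)}\cos^2\phi_0$, giving the claim $H_{\bm\Omega}(\U) \geq \frac{p}{2(p+2)}\cos^2\phi_0\,\norm{\U}_F^2$.

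The main obstacle I anticipate is not conceptual but bookkeeping the constants: one must verify that the constant $6$ in the radius condition $\varepsilon \leq \frac{p}{6(p+2)}\cos^2\phi_0$ genuinely suffices after the (slightly lossy) bound $\frac{4\varepsilon}{(1-\varepsilon^2)^2} \leq C\varepsilon$, which requires first extracting from $\varepsilon \leq \frac{p}{6(p+2)}\cos^2\phi_0$ the crude bound $\varepsilon < \tfrac16$ and then checking the resulting numeric inequality is not too tight. A secondary subtlety is that $\overline{\bm\Omega'} = \I + \alpha\Delta\bm\Omega'$ has $\norm{\overline{\bm\Omega'}-\I}_2 = \alpha\varepsilon \leq \varepsilon$, so all the estimates above go through with $\varepsilon$ replaced by $\alpha\varepsilon$ and are only improved; I would state everything in terms of $\norm{\overline{\bm\Omega'}-\I}_2 \leq \varepsilon$ and use monotonicity. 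One should also confirm the deterministic bound $\x^T\bm\Omega\x \geq 1-\varepsilon$ uses $\norm{\x}=1$ (true on the sphere by Definition \ref{def}) and $\lambda_{\min}(\bm\Omega) \geq 1-\varepsilon$.
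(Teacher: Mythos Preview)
Your approach is genuinely simpler than the paper's and almost works, but the worry you flag at the end is fatal for the constant as stated: the zeroth-order bounds you use cannot recover the constant $6$ in the hypothesis $\varepsilon \le \frac{p}{6(p+2)}\cos^2\phi_0$. Concretely, your argument yields
\[
H_{\overline{\bm\Omega'}}(\Delta\bm\Omega') \;\ge\; \Bigl(\tfrac{p}{p+2}\cos^2\phi_0 - \tfrac{4\varepsilon}{(1-\varepsilon^2)^2}\Bigr)\norm{\Delta\bm\Omega'}_F^2,
\]
and for this to exceed $\frac{p}{2(p+2)}\cos^2\phi_0\,\norm{\Delta\bm\Omega'}_F^2$ you need $\frac{4\varepsilon}{(1-\varepsilon^2)^2}\le \frac{p}{2(p+2)}\cos^2\phi_0 = 3\varepsilon_{\max}$, where $\varepsilon_{\max}=\frac{p}{6(p+2)}\cos^2\phi_0$. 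At $\varepsilon=\varepsilon_{\max}\le 1/6$ this demands $(1-\varepsilon^2)^2\ge 4/3$, which is impossible since the left side is at most $1$. Even to leading order your loss is $4\varepsilon$, so you would need $\varepsilon\le \frac{p}{8(p+2)}\cos^2\phi_0$ and a bit more slack for higher-order terms; the method proves the lemma only with $6$ replaced by roughly $9$.

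The paper avoids this by doing a \emph{first-order} Taylor expansion rather than your zeroth-order comparison. It expands both $\Tr{[(\I+\alpha\Delta\bm\Omega')^{-1}\Delta\bm\Omega']^2}$ and $R^2(\Delta\bm\Omega',\I+\alpha\Delta\bm\Omega';\I)$ to first order in $\alpha$ (using Lemma~\ref{int_bounds} and the explicit moment formulas \eqref{sec_r_mom}, \eqref{t_r_mom} for $R^2,R^3$), obtaining
\[
H_{\overline{\bm\Omega'}}(\Delta\bm\Omega') \;\ge\; \Bigl(\tfrac{p}{p+2}\cos^2\phi_0 - 2\alpha\norm{\Delta\bm\Omega'}_2 + r\Bigr)\norm{\Delta\bm\Omega'}_F^2,
\]
with a remainder $r=O(\varepsilon^2)$. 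The linear correction is now $2\varepsilon$, not $4\varepsilon$, and this factor of two is exactly what makes the constant $6$ (barely) work. So your strategy is sound and more elementary, but to hit the stated constant you must sharpen the comparison to first order; otherwise state and prove the lemma with a slightly larger constant and propagate it through Theorem~\ref{thm:main_res}.
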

\begin{proof}
Is provided in Appendix \ref{conc_bounds}.
\end{proof}

\begin{corollary}
\label{cor_lem_3}
Under the conditions of Lemma \ref{lem:lem_3} for $\tau$ satisfying
$\frac{1+1/p^2}{\ln(32\sqrt{2}p^2)} \leq \tau\frac{p(1-\varepsilon)^2\cos^2{\phi_0}}{p+2} \leq 1$
\small
\begin{align*}
&\mathbb{P}\(\frac{1}{2}\nabla^2 \widehat{f_{\overline{\bm\Omega'}}}(\Delta\bm\Omega') \leq (1-\tau)\frac{p}{4(p+2)}\cos^2{\phi_0}\norm{\Delta\bm\Omega'}_F^2\) \nonumber\\
&\leq 2p^2\exp\(-\frac{n\tau\cos^2{\phi_0}}{46\ln(7p)(1+\frac{2}{p})}\)\(1+\frac{2\cdot10^3(1+\frac{2}{p})^4}{n^2\tau^4\cos^8{\phi_0}}\).
\end{align*}
\normalsize
\end{corollary}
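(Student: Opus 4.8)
The plan is to derive Corollary~\ref{cor_lem_3} by feeding the deterministic lower bound of Lemma~\ref{lem:lem_3} into the concentration estimate of Lemma~\ref{lem:lem_2}, with the curvature–slack parameter of the latter chosen as
\begin{equation*}
\eta=\frac{\tau}{2\sqrt2}\,\frac{p(1-\varepsilon)^2\cos^2{\phi_0}}{p+2}.
\end{equation*}
First I would check that this $\eta$ is legitimate. Lemma~\ref{lem:lem_2} must be applied at $\overline{\bm\Omega'}=\I+\alpha\Delta\bm\Omega'$, not at $\bm\Omega'$; since $\alpha\in[0,1]$ and $\varepsilon=\norm{\Delta\bm\Omega'}_2\le\frac{p}{6(p+2)}\cos^2{\phi_0}<\frac16<1$, we have $\norm{\alpha\Delta\bm\Omega'}_2=\alpha\varepsilon\le\varepsilon<1$, so the lemma applies, and because $\alpha\varepsilon\le\varepsilon$ the slack term there is at most $\eta\sqrt2\,\norm{\Delta\bm\Omega'}_F^2/(1-\varepsilon)^2$, while its failure probability does not depend on the perturbation size. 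With the above choice $\eta=\frac{1}{2\sqrt2}\cdot\tau\frac{p(1-\varepsilon)^2\cos^2{\phi_0}}{p+2}$, so the two hypotheses on $\tau$ in the corollary translate exactly into the admissible window $\frac{1}{2\sqrt2}\frac{1+1/p^2}{\ln(32\sqrt2 p^2)}\le\eta\le\frac{1}{2\sqrt2}$ required by Lemma~\ref{lem:lem_2} (modulo the boundary cases).

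The second step is the combination itself. On the high–probability event of Lemma~\ref{lem:lem_2} we get $\nabla^2\widehat{f_{\overline{\bm\Omega'}}}(\Delta\bm\Omega')\ge H_{\overline{\bm\Omega'}}(\Delta\bm\Omega')-\frac{\eta\sqrt2}{(1-\varepsilon)^2}\norm{\Delta\bm\Omega'}_F^2$, and the point of the chosen $\eta$ is that $\frac{\eta\sqrt2}{(1-\varepsilon)^2}=\frac{\tau p\cos^2{\phi_0}}{2(p+2)}$. Plugging in the bound $H_{\overline{\bm\Omega'}}(\Delta\bm\Omega')\ge\frac{p}{2(p+2)}\cos^2{\phi_0}\norm{\Delta\bm\Omega'}_F^2$ from Lemma~\ref{lem:lem_3} gives, on that event, $\frac12\nabla^2\widehat{f_{\overline{\bm\Omega'}}}(\Delta\bm\Omega')\ge(1-\tau)\frac{p}{4(p+2)}\cos^2{\phi_0}\norm{\Delta\bm\Omega'}_F^2$. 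Hence the bad event in the corollary is contained in the failure event of Lemma~\ref{lem:lem_2}, whose probability is at most $2p^2\exp\left(-\frac{n\eta\sqrt2}{8\ln(32\sqrt2 p^2)}\right)\left(1+\frac{7}{n^2\eta^4}\right)$.

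The third step is to rewrite this probability with the stated constants. Substituting $\eta$ yields $\frac{n\eta\sqrt2}{8\ln(32\sqrt2 p^2)}=\frac{n\tau p(1-\varepsilon)^2\cos^2{\phi_0}}{16(p+2)\ln(32\sqrt2 p^2)}$; using $(1-\varepsilon)^2\ge\frac{25}{36}$ and $\ln(32\sqrt2 p^2)\le2\ln(7p)$ (the latter because $32\sqrt2<49$), this is at least $\frac{n\tau\cos^2{\phi_0}}{46\ln(7p)(1+2/p)}$. Likewise $\eta^4=\frac{\tau^4 p^4(1-\varepsilon)^8\cos^8{\phi_0}}{64(p+2)^4}$, so $(1-\varepsilon)^8\ge(5/6)^8$ gives $\frac{7}{n^2\eta^4}\le\frac{2\cdot10^3(1+2/p)^4}{n^2\tau^4\cos^8{\phi_0}}$. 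Inserting these two estimates into the previous display produces precisely the bound claimed.

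I expect the only genuinely delicate point to be this last constant–chasing: the margins in the crude estimates $(1-\varepsilon)^2\ge25/36$ and $\ln(32\sqrt2 p^2)\le2\ln(7p)$ are thin, so to land on the round constants $46$ and $2\cdot10^3$ rather than on something marginally larger one has to keep the $1/p^2$ terms and, if needed, exploit that $\varepsilon$ is \emph{strictly} below $\frac{p}{6(p+2)}\cos^2{\phi_0}$; everything else is a direct substitution of Lemmas~\ref{lem:lem_2} and~\ref{lem:lem_3}.
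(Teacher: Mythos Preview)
Your proposal is correct and follows exactly the paper's own argument: the paper also sets $\eta=\tau\frac{(1-\varepsilon)^2}{\sqrt2}\frac{p}{2(p+2)}\cos^2{\phi_0}$, applies Lemma~\ref{lem:lem_2} at $\overline{\bm\Omega'}$ (using $\alpha\varepsilon\le\varepsilon\le\frac16$), combines with Lemma~\ref{lem:lem_3}, and then simplifies the constants. Your remark about the tightness of the constants $46$ and $2\cdot10^3$ is accurate---the paper does not spell out these numerics and the margin is indeed thin---but the approach is identical.
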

\begin{proof}
Is provided in Appendix \ref{conc_bounds}.
\end{proof}

It now follows that with probability at least
\small
\begin{align}
&1-2\exp\(\frac{-nt^2}{2(1+1.7t)}\) \nonumber\\
&- 2p^2\exp\(-\frac{n\tau\cos^2{\phi_0}}{46\ln(7p)(1+\frac{2}{p})}\)\(1+\frac{2\cdot10^3(1+\frac{2}{p})^4}{n^2\tau^4\cos^8{\phi_0}}\),
\label{true_prob}
\end{align}
\begin{equation*}
\widehat{f_{\bm\Omega'}} - \widehat{f_{\I}} \geq - t p \norm{\Delta\bm\Omega'}_F + (1-\tau)\frac{p}{4(p+2)}\cos^2{\phi_0}\norm{\Delta\bm\Omega'}_F^2.
\end{equation*}
\normalsize
Demand positivity of the right-hand side to get
\begin{equation}
\norm{\Delta\bm\Omega'}_F > \frac{4t}{1-\tau}\frac{p+2}{\cos^2{\phi_0}}.
\label{true_b}
\end{equation}
Set $\tau = \frac{3}{5}, t = \frac{\theta}{\sqrt{n}}$ and use (\ref{lamb_b}) to get the statement of the theorem.

\section{Numerical Results}
In this section we provide numerical simulations supporting our analysis. Figure \ref{perf_n} compares the behavior of Tyler's estimator with its $0.95$ and $0.5$-probability bounds for $p=50, \bm\Theta_0 = \I$. For a given $n$ Tyler's estimator error bound was obtained by minimizing (\ref{true_b}) with respect to $t$ and $\tau$ under the probability constraint given by equating (\ref{true_prob}) to $0.95$ or $0.5$ respectively.

Figure \ref{perf_p} verifies the dependence of the performance on the dimension.
\begin{figure}
\centering
\includegraphics[height=3.1in]{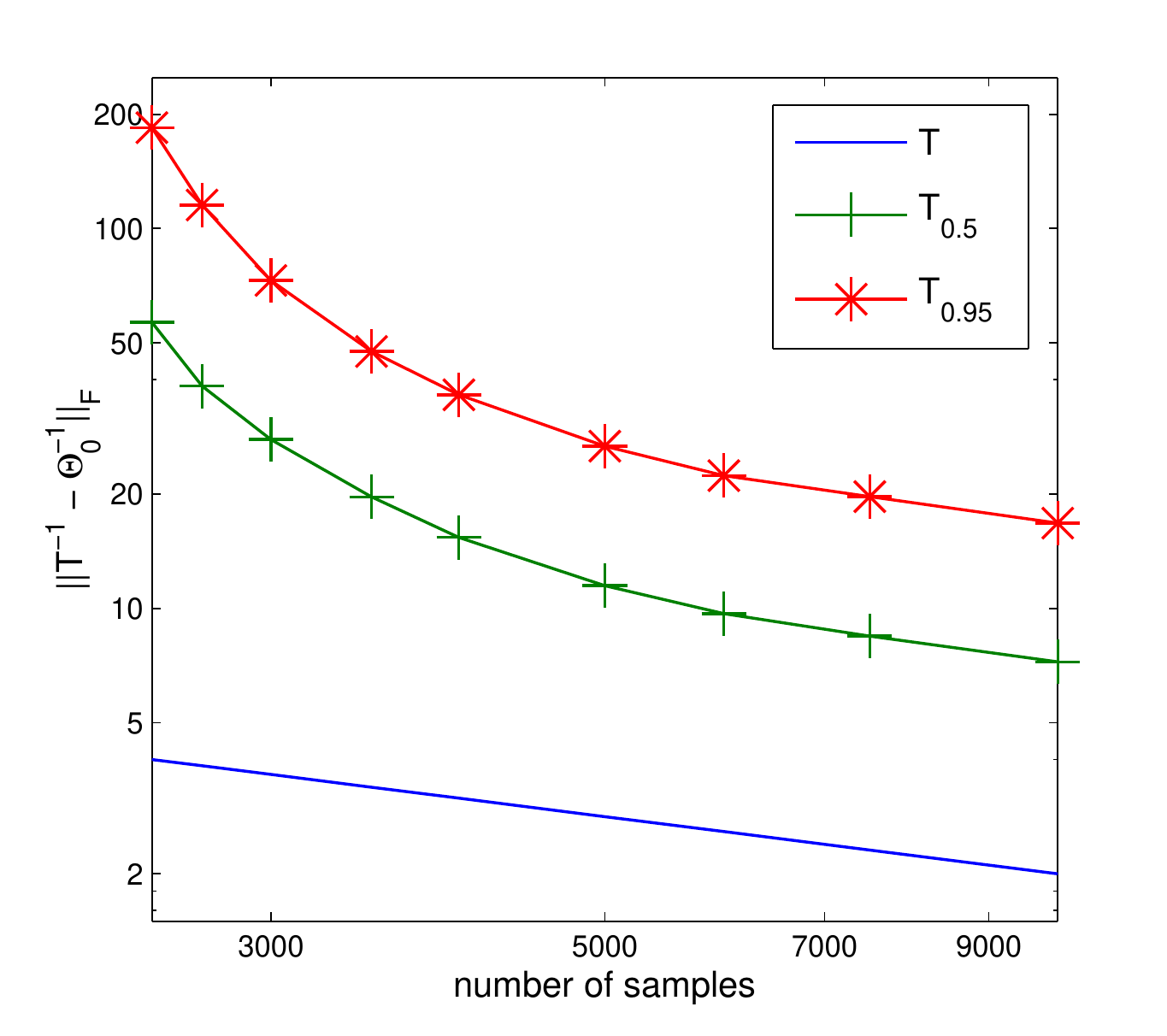}
\caption{Tyler's estimator performance bounds, $p=50$, $\bm\Theta_0 = \I$.}
\label{perf_n}
\end{figure}
\begin{figure}
\centering
\includegraphics[height=3.1in]{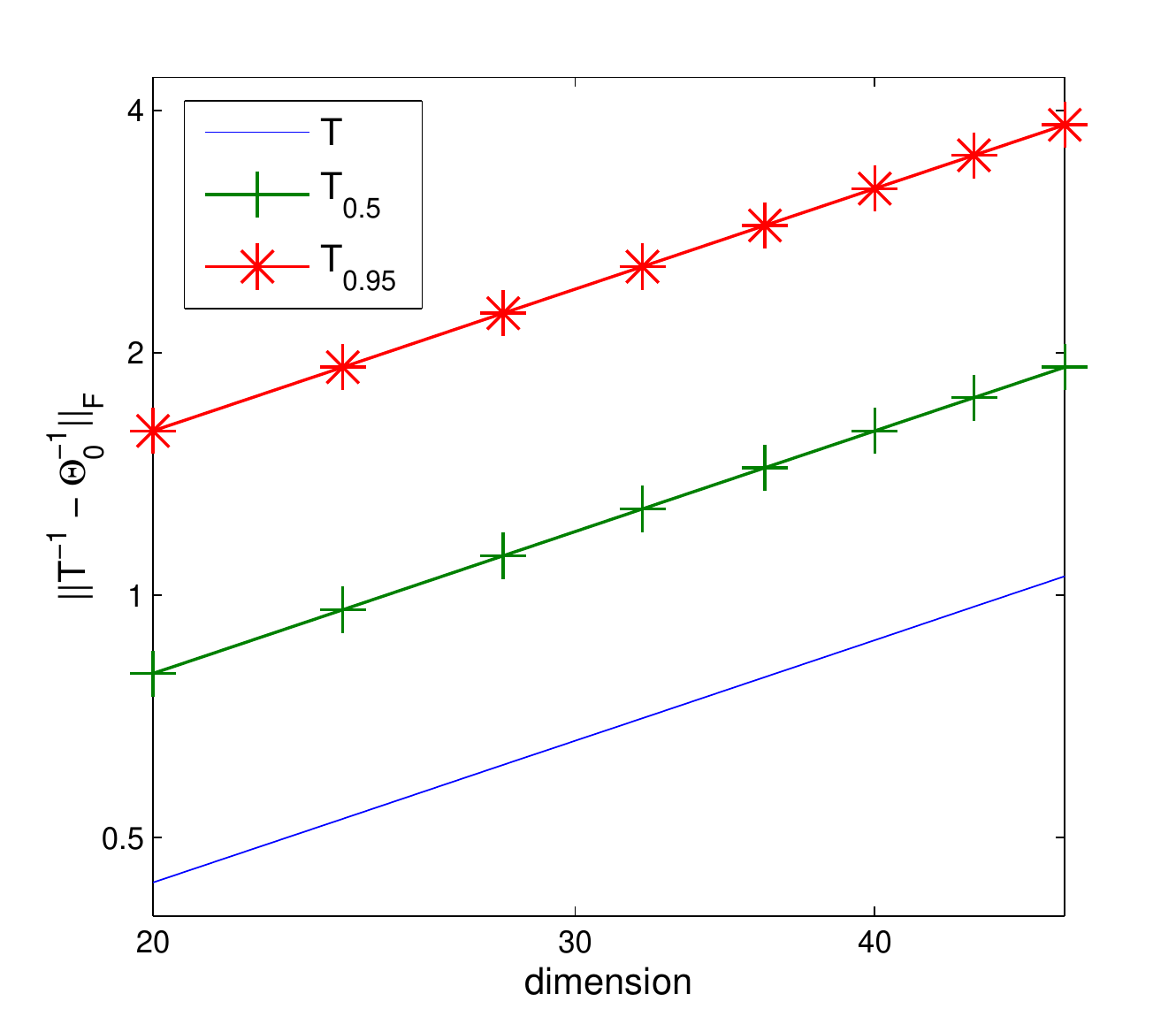}
\caption{Tyler's estimator performance bounds, $n=2500, \bm\Theta_0(p) = \I_p$.}
\label{perf_p}
\end{figure}

\section{Acknowledgment}
The authors would like to thank the associate editor and anonymous reviewers whose input greatly improved the paper.

\begin{appendices}
\section{Moments of Ratios of Quadratic Forms}
\label{f_app}

\begin{lemma}
\label{int_bounds}
Let $\x' \sim \mathcal{U}(\I), \U' \in \mathcal{S}(p), \bm\Omega' = \I + \Delta \bm\Omega' \succ 0$ and $\varepsilon = \norm{\Delta \bm\Omega'}_2 < 1$, then the moments defined in (\ref{r_def}) satisfy
\begin{align}
\frac{R^\nu(\U',\I;\I)}{(1+\varepsilon)^\nu} \leq R^\nu(\U',\bm\Omega';\I) \leq \frac{R^\nu(\U',\I;\I)}{(1-\varepsilon)^\nu}, \nu \in \N.
\label{lem_ineq_r}
\end{align}
In addition
\small
\begin{align*}
&\left|R^\nu(\Delta \bm\Omega',\I + \alpha\Delta \bm\Omega';\I) - R^\nu(\Delta \bm\Omega',\I;\I) + \nu \alpha R^{\nu+1}(\Delta \bm\Omega',\I;\I)\right| \nonumber\\
&\leq \frac{\nu(\nu+1)\alpha^2}{2(1-\alpha\varepsilon)^{\nu+2}}R^{\nu+2}(\Delta \bm\Omega',\I;\I).
\end{align*}
\normalsize
\end{lemma}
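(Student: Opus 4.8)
The plan is to reduce both inequalities to pointwise statements in the integration variable and then take expectations. Everything rests on one elementary observation: since $\x' \sim \mathcal{U}(\I)$ is supported on the unit sphere, $\norm{\x'}=1$, so for any symmetric $\M$ one has $\x'^T(\I+\M)\x' = 1 + \x'^T\M\x'$ with $|\x'^T\M\x'| \le \norm{\M}_2$. Taking $\M=\Delta\bm\Omega'$ (resp. $\M=\alpha\Delta\bm\Omega'$ with $\alpha\in[0,1]$) shows that the denominator $\x'^T\bm\Omega'\x'$ takes values in $[1-\varepsilon,1+\varepsilon]$ (resp. $\x'^T(\I+\alpha\Delta\bm\Omega')\x'$ in $[1-\alpha\varepsilon,1+\alpha\varepsilon]$), which is bounded away from $0$ because $\varepsilon<1$. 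This is what makes all the expressions below well defined and is, incidentally, where the hypothesis $\bm\Omega'\succ 0$ comes from.

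For the first bound, the inclusion $\x'^T\bm\Omega'\x' \in [1-\varepsilon,1+\varepsilon]$ together with monotonicity of $t\mapsto t^{-\nu}$ on $(0,\infty)$ gives, pointwise in $\x'$,
\begin{equation*}
(1+\varepsilon)^{-\nu} \le (\x'^T\bm\Omega'\x')^{-\nu} \le (1-\varepsilon)^{-\nu}.
\end{equation*}
Multiplying by $(\x'^T\U'\x')^\nu$, which is nonnegative whenever $\nu$ is even (the case needed in the curvature estimates, with $\nu=2$; it also holds when $\U'$ is semidefinite), preserves the two inequalities pointwise. Taking expectations and recalling that $R^\nu(\U',\I;\I)=\mathbb{E}\big[(\x'^T\U'\x'/\x'^T\x')^\nu\big]=\mathbb{E}\big[(\x'^T\U'\x')^\nu\big]$ (again because $\norm{\x'}=1$) yields (\ref{lem_ineq_r}).

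For the second bound I would fix $\x'$, set $a=\x'^T\Delta\bm\Omega'\x'$ (so $|a|\le\varepsilon$), and study the scalar function $\phi(\alpha)=\big(\x'^T\Delta\bm\Omega'\x'/\x'^T(\I+\alpha\Delta\bm\Omega')\x'\big)^\nu=r(\alpha)^\nu$, where $r(\alpha)=a/(1+\alpha a)$ satisfies $r'(\alpha)=-r(\alpha)^2$. Hence $\phi'(\alpha)=-\nu\,r(\alpha)^{\nu+1}$ and $\phi''(\alpha)=\nu(\nu+1)\,r(\alpha)^{\nu+2}$, and since $1+\alpha a$ stays positive on $[0,1]$, Taylor's formula with Lagrange remainder around $\alpha=0$ gives, for some $\xi\in[0,\alpha]$,
\begin{equation*}
\phi(\alpha)=a^\nu-\nu\alpha\,a^{\nu+1}+\frac{\nu(\nu+1)}{2}\,\alpha^2\,\frac{a^{\nu+2}}{(1+\xi a)^{\nu+2}}.
\end{equation*}
Using $1+\xi a\ge 1-\alpha\varepsilon>0$ bounds the remainder by $\tfrac{\nu(\nu+1)}{2(1-\alpha\varepsilon)^{\nu+2}}\alpha^2|a|^{\nu+2}$ pointwise. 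Taking expectations over $\x'$, invoking $|\mathbb{E}Y|\le\mathbb{E}|Y|$, and identifying $\mathbb{E}[\phi(\alpha)]=R^\nu(\Delta\bm\Omega',\I+\alpha\Delta\bm\Omega';\I)$, $\mathbb{E}[a^\nu]=R^\nu(\Delta\bm\Omega',\I;\I)$, $\mathbb{E}[a^{\nu+1}]=R^{\nu+1}(\Delta\bm\Omega',\I;\I)$, and (for even $\nu$, so that $|a|^{\nu+2}=a^{\nu+2}$) $\mathbb{E}[|a|^{\nu+2}]=R^{\nu+2}(\Delta\bm\Omega',\I;\I)$ gives the stated inequality.

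There is essentially no serious obstacle here; the proof is short bookkeeping around Taylor's theorem. The only points that need attention are that the quadratic forms $\x'^T\U'\x'$ and $\x'^T\Delta\bm\Omega'\x'$ are not sign-definite, so one must (i) keep the denominator positive throughout — which is exactly the role of the hypothesis $\varepsilon<1$ — and (ii) watch the parity of $\nu$ when passing from $a^{\nu+2}$ to $|a|^{\nu+2}$ and from $(\x'^T\U'\x')^\nu$ to $|\cdot|^\nu$; this is why the even-$\nu$ reading, which is all that the Hessian analysis requires, is the convenient one.
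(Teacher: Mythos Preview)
Your proof is correct and follows essentially the same route as the paper: a pointwise sandwich on the denominator $\x'^T\bm\Omega'\x'\in[1-\varepsilon,1+\varepsilon]$ for the first bound, and a second-order Taylor expansion with Lagrange remainder (the paper expands $(1+\alpha a)^{-\nu}$ and multiplies by $a^\nu$, you expand $\phi(\alpha)=r(\alpha)^\nu$ directly, which is the same computation) for the second. Your explicit attention to the parity of $\nu$ is in fact more careful than the paper's own argument, which writes the pointwise inequalities for all $\nu\in\N$ without comment; as you observe, only even $\nu$ is ever invoked downstream.
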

\begin{proof}
Consider
\begin{equation}
R^\nu(\U',\bm\Omega';\I) = \mathbb{E}\[\(\frac{\x'^T\U' \x'}{\x'^T(\I + \Delta \bm\Omega') \x'}\)^\nu\],
\label{aux_b}
\end{equation}
since
\begin{equation*}
1 + \varepsilon \geq 1 + \x'^T\Delta \bm\Omega' \x' \geq 1 - \varepsilon,
\end{equation*}
the variable under expectation in (\ref{aux_b}) can be bounded as
\small
\begin{equation*}
\(\frac{\x'^T\U' \x'}{1 + \varepsilon}\)^\nu \leq \(\x'^T\U' \x'\(1+\x'^T\Delta \bm\Omega'\)^{-1}\)^\nu \leq \(\frac{\x'^T\U' \x'}{1 - \varepsilon}\)^\nu.
\end{equation*}
\normalsize
By taking the expectations (\ref{lem_ineq_r}) follows. We continue
\small
\begin{align*}
&R^\nu(\Delta \bm\Omega',\I + \alpha\Delta \bm\Omega';\I) = \mathbb{E}\[\(\frac{\x'^T\Delta \bm\Omega' \x'}{\x'^T(\I + \alpha\Delta \bm\Omega') \x'}\)^\nu\] \\
&= \mathbb{E}\[\(\x'^T\Delta \bm\Omega' \x'\(1+\alpha\x'^T\Delta \bm\Omega'\x'\)^{-1}\)^\nu\] \\
&=\mathbb{E}\[\(\x'^T\Delta \bm\Omega' \x'\)^\nu\(1-\nu\alpha\x'^T\Delta \bm\Omega'\x' + g(\alpha\Delta \bm\Omega',\x)\)\], \nonumber
\end{align*}
\normalsize
where $g(\alpha\Delta \bm\Omega',\x')$ is bounded by
\small
\begin{equation*}
|g(\alpha\Delta \bm\Omega',\x')| \leq \nu(\nu+1)\alpha^2\frac{(\x'^T\Delta \bm\Omega'\x')^2}{2(1-\alpha\varepsilon)^{\nu+2}}.
\end{equation*}
\normalsize
We finally obtain
\small
\begin{align*}
&|R^\nu(\Delta \bm\Omega',\I + \alpha\Delta \bm\Omega';\I) - R^\nu(\Delta \bm\Omega',\I;\I) + \nu\alpha R^{\nu+1}(\Delta \bm\Omega',\I;\I)| \\
&\leq \frac{\nu(\nu+1)\alpha^2}{2(1-\alpha\varepsilon)^{\nu+2}}R^{\nu+2}(\Delta \bm\Omega',\I;\I).
\end{align*}
\normalsize
\end{proof}

Given $\x' \sim \mathcal{U}(\I)$, we can always represent it as $\x'=\frac{\z'}{\norm{\z'}}$, where $\z'$ is standard normally distributed $\z' \sim \mathcal{N}(0, \I)$, to obtain
\begin{equation*}
\frac{\x'^T\U' \x'}{\x'^T\bm\Omega' \x'} =\frac{\z'^T\U' \z'}{\z'^T\bm\Omega' \z'},
\end{equation*}
using this identity, we develop formulas for $R^\nu(\U',\I;\I)$, \cite{de1980exact}:
\footnotesize
\begin{equation}
R^1(\U',\I;\I) = \frac{\Tr{\U'}}{p},
\label{fir_r_mom}
\end{equation}
\begin{equation}
R^2(\U',\I;\I) =\frac{\(\Tr{\U'}\)^2+2\Tr{\(\U'\)^2}}{p(p+2)} =\frac{\(\Tr{\U'}\)^2+
2\norm{\U'}_F^2}{p(p+2)},
\label{sec_r_mom}
\end{equation}
\begin{align}
R^3(\U',\I;\I) =\frac{\Tr{\U'}^3+6\Tr{\U'}\Tr{\U'^2}
+8\Tr{\U'^3}}{p(p+2)(p+4)}.
\label{t_r_mom}
\end{align}
\normalsize
A general moment bound can also be obtained as
\begin{equation}
R^{\nu}(\U',\I;\I) \leq \frac{(\frac{\nu}{2})!\norm{\U'}_F^\nu}{\sqrt{p}^\nu}, \nu=2,4,\dots.
\label{r_k_bound}
\end{equation}

\section{Concentration Bounds}
\label{conc_bounds}
\begin{lemma}(Vector Bernstein Inequality) \cite{yurinskiui1976exponential}
\label{v_bernstein_th}
Let $\bm\xi_1,\dots,\bm\xi_n \in \mathbb{R}^k$  be i.i.d zero-mean random vectors and suppose there exist $\sigma, L>0$ such that
\begin{equation*}
\mathbb{E}\norm{\bm\xi_1}^r \leq \frac{r!}{2}\sigma^2 L^{r-2}, r=2,3,\dots,
\end{equation*}
then for $t\geq 0$
\begin{equation*}
\mathbb{P}\(\norm{\widehat{\bm\xi}} \geq t \sigma\) \leq
2\exp\(\frac{-nt^2}{2(1+1.7t\frac{L}{\sigma})}\).
\end{equation*}
\end{lemma}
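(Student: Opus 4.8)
\emph{The plan is to reproduce Yurinskii's martingale argument \cite{yurinskiui1976exponential}.} Write $S_n=\sum_{i=1}^n\bm\xi_i=n\widehat{\bm\xi}$, so the assertion is equivalent to $\mathbb{P}(\norm{S_n}\ge nt\sigma)\le 2\exp(-nt^2/(2(1+1.7tL/\sigma)))$. I would introduce the filtration $\mathcal{F}_j=\sigma(\bm\xi_1,\dots,\bm\xi_j)$ and the Doob martingale $M_j=\mathbb{E}[\norm{S_n}\mid\mathcal{F}_j]$, so that $M_0=\mathbb{E}\norm{S_n}$ and $M_n=\norm{S_n}$. Since $\bm\xi_{j+1},\dots,\bm\xi_n$ are independent of $\mathcal{F}_j$, one has $M_j=h_j(S_j)$ where $h_j(x):=\mathbb{E}\norm{x+\bm\xi_{j+1}+\dots+\bm\xi_n}$ is convex and $1$-Lipschitz, and the increment is $Y_j:=M_j-M_{j-1}=h_j(S_{j-1}+\bm\xi_j)-\mathbb{E}_{\bm\xi_j}h_j(S_{j-1}+\bm\xi_j)$.

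The core step is a Bernstein-type bound on the conditional exponential moment of each increment: for $0\le\lambda<1/L$,
\begin{equation*}
\mathbb{E}[e^{\lambda Y_j}\mid\mathcal{F}_{j-1}]\le\exp\left(\frac{\lambda^2\,\mathbb{E}\norm{\bm\xi_j}^2}{2(1-\lambda L)}\right),
\end{equation*}
up to the constant that ultimately produces the $1.7$. To get this, Taylor-expand $h_j$ about $S_{j-1}$: writing $D_j:=h_j(S_{j-1}+\bm\xi_j)-h_j(S_{j-1})-\langle\nabla h_j(S_{j-1}),\bm\xi_j\rangle$, convexity gives $D_j\ge0$ while $1$-Lipschitzness gives $D_j\le 2\norm{\bm\xi_j}$; moreover, for $j<n$ the function $h_j$ is a genuine smoothing of the Euclidean norm by the non-degenerate law of $\bm\xi_{j+1}+\dots+\bm\xi_n$, so its Hessian has operator norm controlled in a \emph{dimension-free} way (the Hessian of $\norm{\cdot}$ is $\norm{x}^{-1}(\I-xx^T/\norm{x}^2)$), yielding the sharper $D_j\lesssim\norm{\bm\xi_j}^2$ away from the origin. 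Decomposing $Y_j=\langle\nabla h_j(S_{j-1}),\bm\xi_j\rangle+(D_j-\mathbb{E}_{\bm\xi_j}D_j)$, the first summand is a mean-zero scalar with $|\langle\nabla h_j(S_{j-1}),\bm\xi_j\rangle|\le\norm{\bm\xi_j}$; expanding $e^{\lambda Y_j}$ in its power series and inserting the factorial-moment hypothesis $\mathbb{E}\norm{\bm\xi_j}^r\le\frac{r!}{2}\sigma^2L^{r-2}$ term by term produces the geometric-series denominator $1-\lambda L$ characteristic of Bernstein bounds, with leading quadratic coefficient governed by $\mathbb{E}\norm{\bm\xi_j}^2\le\sigma^2$. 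The single exceptional increment $j=n$, where $h_n=\norm{\cdot}$ is not smooth at the origin, is absorbed using only $D_n\le2\norm{\bm\xi_n}$ and costs a constant.

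Telescoping and conditioning successively on $\mathcal{F}_{n-1},\dots,\mathcal{F}_0$ then gives $\mathbb{E}[e^{\lambda(M_n-M_0)}]\le\exp(n\lambda^2\sigma^2/(2(1-\lambda L)))$, so by Markov's inequality and the standard optimization over $\lambda\in[0,1/L)$ one gets $\mathbb{P}(M_n-M_0\ge s)\le\exp(-s^2/(2(n\sigma^2+Ls)))$. It remains to handle the centering: by Jensen and independence, $M_0=\mathbb{E}\norm{S_n}\le(\mathbb{E}\norm{S_n}^2)^{1/2}=(n\,\mathbb{E}\norm{\bm\xi_1}^2)^{1/2}\le\sqrt{n}\,\sigma$, using the $r=2$ instance of the hypothesis. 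Taking $s=nt\sigma-\sqrt{n}\,\sigma$ and noting that for $t\le n^{-1/2}$ the claimed bound exceeds $1$ and is trivially true, whereas for $t\ge n^{-1/2}$ the shift $\sqrt{n}\,\sigma$ is dominated by $nt\sigma$, one repackages the constants (this is where the bare $1$ is replaced by $1.7$, and the factor $2$ absorbs both the slack in the centering step and the trivial small-$t$ regime) to reach exactly the stated inequality.

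I expect the main obstacle to be the conditional increment estimate: producing a clean, dimension-free bound on the Bregman remainder $D_j$ of the Euclidean norm that is simultaneously quadratic in $\bm\xi_j$ when $\norm{S_{j-1}}$ is not too small (so the variance proxy is the true $\mathbb{E}\norm{\bm\xi_j}^2$ and not something larger) and robust both to $\norm{S_{j-1}}$ being small and to the non-differentiability of $\norm{\cdot}$ at the origin — handled through the smoothing by the tail sum for $j<n$ and the crude linear bound for $j=n$ — and then carefully bookkeeping the constants through the power-series estimate so that $1.7$ emerges. Everything else is the routine Chernoff machinery.
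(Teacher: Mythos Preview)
The paper does not prove this lemma at all: it is stated as a quotation from \cite{yurinskiui1976exponential} and used as a black box (the next lemma, the matrix Bernstein inequality, is likewise quoted without proof). So there is no ``paper's own proof'' to compare against; any comparison is with Yurinskii's original argument, not with anything in this article.

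On the substance of your sketch: the Doob-martingale skeleton $M_j=\mathbb{E}[\norm{S_n}\mid\mathcal{F}_j]$, the $1$-Lipschitzness of $h_j$, the conditional-MGF bound, the Chernoff step, and the centering via $M_0\le\sqrt{n}\,\sigma$ are all faithful to Yurinskii's method. Where you drift from the original is in the increment control. You propose to Taylor-expand $h_j$ and get a \emph{quadratic} Bregman remainder $D_j\lesssim\norm{\bm\xi_j}^2$ via a ``dimension-free'' Hessian bound coming from the smoothing by the tail sum. That step is shakier than you let on: the Hessian of $\norm{\cdot}$ at $x$ has operator norm $\norm{x}^{-1}$, so the smoothed Hessian of $h_j$ is bounded only in terms of the small-ball behaviour of $\bm\xi_{j+1}+\dots+\bm\xi_n$, which is not controlled by the moment hypothesis alone. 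Yurinskii does not need this: the $1$-Lipschitz property already gives $|Y_j|\le\norm{\bm\xi_j}+\mathbb{E}\norm{\bm\xi_j}$ (or, via symmetrization, conditional moments of $Y_j$ controlled by moments of $\norm{\bm\xi_j-\bm\xi_j'}$), and the factorial-moment assumption then yields the Bernstein-type MGF bound directly, with no second-order analysis. The constants, including the $1.7$, fall out of that cruder but robust route; your attempt to sharpen the variance proxy to exactly $\mathbb{E}\norm{\bm\xi_j}^2$ is both unnecessary for the stated inequality and not fully justified as written.
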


\begin{lemma}(Matrix Bernstein Inequality) \cite{minsker2011some}
\label{m_bernstein_th}
Let $\S_1,\dots,\S_n \in \mathcal{S}(p)$ be i.i.d zero-mean random matrices and suppose there exist $\sigma, L>0$ such that
\begin{equation*}
\mathbb{E}\norm{\S_1}_2^r \leq \frac{r!}{2}\sigma^2 L^{r-2}, r=2,3,\dots,
\end{equation*}
then for $t > \frac{\sigma}{4L}\frac{1+1/p^2}{\ln{\frac{64\sqrt{2}p^2L^2}{\sigma^2}}}$
\small
\begin{align*}
&\mathbb{P}\(\lambda_{\max}(\widehat{\S}) \geq t\sigma\) \\
&\leq 2p^2 \exp\(-\frac{nt\sigma}{8L\ln{\frac{64\sqrt{2}p^2L^2}{\sigma^2}}}\)\(1+\frac{6}{n^2t^2\sigma^2\ln^2(1+\frac{t}{\sigma})}\).
\end{align*}
\normalsize
\end{lemma}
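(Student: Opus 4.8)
The plan is to establish Lemma \ref{m_bernstein_th} by a standard truncation-plus-Matrix-Bernstein argument, but since the usual Matrix Bernstein inequality (e.g., the Tropp/Minsker form) controls only the deviation on the event where all summands are small, I would follow Minsker's approach of splitting each $\S_i$ into a bounded part and an unbounded tail. First I would fix a truncation level $\rho>0$ and write $\S_i = \S_i\mathbf{1}_{\norm{\S_i}_2\leq\rho} + \S_i\mathbf{1}_{\norm{\S_i}_2>\rho}$; the second (tail) piece is handled crudely by a union bound and Markov/Chebyshev on $\mathbb{E}\norm{\S_i}_2^2$, which is where the additive factor $\left(1+\frac{6}{n^2t^2\sigma^2\ln^2(1+t/\sigma)}\right)$ in the conclusion comes from. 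For the bounded part, I would apply the classical Bernstein-type bound for sums of independent symmetric matrices with a uniform operator-norm bound $\rho$ and variance proxy $n\sigma^2$: this gives a bound of the form $2p^2\exp\!\left(-\frac{n t^2\sigma^2/2}{\sigma^2 + \rho t\sigma/3}\right)$ for $\mathbb{P}(\lambda_{\max}(\widehat{\S})\geq t\sigma)$, using that $\mathbb{E}\S_i=\0$ and the moment hypothesis $\mathbb{E}\norm{\S_1}_2^r\leq \frac{r!}{2}\sigma^2 L^{r-2}$, which in particular bounds the matrix variance by $\sigma^2\I$.

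Next I would optimize the truncation level $\rho$ to balance the exponential term against the polynomial tail correction. Choosing $\rho$ of order $L\ln(p^2 L^2/\sigma^2)$ (so that the truncated mass is negligible up to the stated polynomial factor) converts the sub-Gaussian/sub-exponential crossover so that in the regime $t > \frac{\sigma}{4L}\frac{1+1/p^2}{\ln(64\sqrt2 p^2 L^2/\sigma^2)}$ the linear-in-$t$ term dominates the exponent, yielding the claimed form $\exp\!\left(-\frac{nt\sigma}{8L\ln(64\sqrt2 p^2 L^2/\sigma^2)}\right)$. The constants $8$, $64\sqrt2$, and the $1+1/p^2$ cutoff are precisely what fall out of this balancing once one tracks the factor-of-two losses from symmetrization (if needed) and from the matrix MGF bound $\log\mathbb{E}\exp(\lambda\S)\preceq \frac{\lambda^2\sigma^2/2}{1-\lambda\rho/3}\I$.

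I expect the main obstacle to be bookkeeping rather than conceptual: carefully propagating the truncation error through the matrix Chernoff bound while keeping the constants explicit enough to match the stated inequality, and verifying that the threshold on $t$ is exactly the point at which the Bernstein exponent transitions from quadratic to linear behavior. A secondary subtlety is that the moment condition $\mathbb{E}\norm{\S_1}_2^r\leq\frac{r!}{2}\sigma^2 L^{r-2}$ must be used both to bound the variance (the $r=2$ case, giving the $\sigma^2$ in the exponent) and to control the truncated tail $\mathbb{P}(\norm{\S_i}_2>\rho)\leq\sigma^2 L^{-2}e^{-\rho/L}$ via the standard $\sum_r \frac{r!}{2}(\lambda L)^{r-2}$ series — I would make sure these two uses are consistent. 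Since this lemma is cited from \cite{minsker2011some}, in the write-up I would either reproduce the argument at this level of detail or, more economically, reduce it to the precise statement available there and note the specialization to real symmetric matrices; I lean toward the latter, supplying the truncation sketch only as far as is needed to make the constants transparent.
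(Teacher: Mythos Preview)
The paper does not prove this lemma at all: it is stated with the citation \cite{minsker2011some} and then used as a black box in the proof of Lemma~\ref{lem:lem_2}. Your closing inclination --- to simply reduce to the statement in \cite{minsker2011some} and note the specialization to real symmetric matrices --- is exactly what the paper does, so on that point you match it precisely.

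Your truncation-plus-matrix-Bernstein sketch is a reasonable outline of how results of this type are obtained in Minsker's work, and nothing in it is obviously wrong as a plan, but be aware that none of that argument appears in the paper itself. If your goal is to mirror the paper, the correct move is simply to cite the lemma and move on; any reproduction of the proof would go strictly beyond what the authors do.
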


\begin{lemma}
\label{large_dev_lin}
Let $\x_i' \sim \mathcal{U}(\I) , i=1,\dots,n$ then for $t \geq 0$
\begin{equation*}
\mathbb{P}\(\norm{p\widehat{\x'\x'^T}-\I}_F \geq t p\) \leq 2\exp\(\frac{-nt^2}{2(1+1.7t)}\)
\end{equation*}
\begin{proof}
Define $n$ centered random vectors
\begin{equation*}
\bm\xi_i = \vec{\x_i'\x_i'^T-\frac{1}{p}\I} \in \mathbb{R}^{p^2}, i=1,\dots,n,
\end{equation*}
and consider powers of their norms
\small
\begin{align*}
&\norm{\bm\xi_1}^r = \[\Tr{\(\x_1'\x_1'^T-\frac{1}{p}\I\)^2}\]^{\frac{r}{2}} \\
&= \[\Tr{\(1-\frac{2}{p}\)\x_1'\x_1'^T+\frac{1}{p^2}\I}\]^{\frac{r}{2}} = \(1-\frac{1}{p}\)^{\frac{r}{2}},
\end{align*}
\normalsize
which are deterministic quantities. Set
\begin{equation*}
\sigma = L = 1,
\end{equation*}
and apply Lemma \ref{v_bernstein_th} to obtain
\begin{equation*}
\mathbb{P}\(\norm{\widehat{\bm\xi}} \geq t\) \leq 2\exp\(\frac{-nt^2}{2(1+ 1.7t)}\).
\end{equation*}
Multiply $\widehat{\bm\xi}$ by $p$ to get the statement.
\end{proof}
\end{lemma}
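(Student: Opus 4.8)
The plan is to rewrite $\norm{p\widehat{\x'\x'^T}-\I}_F = p\,\norm{\widehat{\x'\x'^T}-\tfrac1p\I}_F$ and apply the Vector Bernstein Inequality (Lemma~\ref{v_bernstein_th}) to the vectorized, centered rank-one matrices $\vec{\x_i'\x_i'^T}$. First I would record that for $\x'\sim\mathcal{U}(\I)$, the uniform distribution on the unit sphere, rotational symmetry together with $\mathbb{E}\norm{\x'}^2=1$ forces $\mathbb{E}\[\x'\x'^T\]=\tfrac1p\I$. Hence the vectors $\bm{\xi}_i=\vec{\x_i'\x_i'^T-\tfrac1p\I}\in\mathbb{R}^{p^2}$ are i.i.d.\ and zero-mean, and their empirical average obeys $\norm{\widehat{\bm{\xi}}}=\norm{\widehat{\x'\x'^T}-\tfrac1p\I}_F$, so a tail bound on $\norm{\widehat{\bm{\xi}}}$ immediately gives the statement after multiplying by $p$.

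The step that makes everything go through cleanly is the observation that the norms $\norm{\bm{\xi}_i}$ are in fact deterministic. Since $\norm{\x_i'}=1$ we have $(\x_i'\x_i'^T)^2=\x_i'\x_i'^T$, so $(\x_i'\x_i'^T-\tfrac1p\I)^2=(1-\tfrac2p)\x_i'\x_i'^T+\tfrac1{p^2}\I$, whose trace equals $1-\tfrac1p$; therefore $\norm{\bm{\xi}_i}^r=(1-\tfrac1p)^{r/2}\le 1$ for every $r\ge 2$. Consequently the Bernstein moment hypothesis $\mathbb{E}\norm{\bm{\xi}_1}^r\le\frac{r!}{2}\sigma^2 L^{r-2}$ is satisfied with the crude choice $\sigma=L=1$ (using $1\le r!/2$ for $r\ge 2$). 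Plugging these constants into Lemma~\ref{v_bernstein_th} yields $\mathbb{P}\(\norm{\widehat{\bm{\xi}}}\ge t\)\le 2\exp\(-nt^2/(2(1+1.7t))\)$, and rescaling by $p$ delivers the claimed inequality.

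I do not expect a genuine obstacle here: the only thing one has to notice is that the spherical constraint makes $\norm{\x_i'\x_i'^T-\tfrac1p\I}_F$ a constant, which collapses the sub-exponential moment control required by the vector Bernstein inequality to an essentially trivial verification; the rest is bookkeeping on the normalization factor $p$. One could in principle optimize over $\sigma$ and $L$ to sharpen the numerical constants, but since this lemma feeds into the gradient bound (Lemma~\ref{lem:lem_1}) via Theorem~\ref{thm:main_res}, the choice $\sigma=L=1$ is already adequate.
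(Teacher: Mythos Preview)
Your proposal is correct and follows essentially the same approach as the paper's proof: vectorize the centered rank-one matrices, exploit the spherical constraint to see that $\norm{\bm\xi_i}^r=(1-1/p)^{r/2}$ is deterministic, invoke the Vector Bernstein Inequality with $\sigma=L=1$, and rescale by $p$. Your write-up is slightly more explicit (you justify $\mathbb{E}[\x'\x'^T]=\tfrac1p\I$ and verify $1\le r!/2$), but the argument is identical in substance.
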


\begin{proof}[Proof of Lemma \ref{lem:lem_1}]
\begin{align*}
\left|\nabla \widehat{f_{\I}}(\U')\right| = \left|\Tr{\(p\widehat{\x' \x'^T} - \I\) \U'}\right|.
\end{align*}
Apply Lemma \ref{large_dev_lin} and the Cauchy-Schwartz inequality to get the statement.
\end{proof}

\begin{proof}[Proof of Lemma \ref{lem:lem_2}]
For a linear operator $L$, its restriction $\widetilde{L}$ to a linear subspace satisfies
\begin{equation*}
\norm{\widetilde{L}}_2 \leq \norm{L}_2,
\end{equation*}
thus we can apply Lemma \ref{m_bernstein_th} to bound the deviation of $\nabla^2 \widehat{f_{\bm\Omega'}}$ restricted to $\mathcal{L'}$ from its expectation $H_{\bm\Omega'}$ also restricted to $\mathcal{L'}$. Define $n$ centered random quadratic forms
\begin{align}
&S_i(\U') = \Tr{(\bm\Omega'^{-1}\U')^2} - p\(\frac{\x_i'^T \U' \x_i'}{\x_i'^T  \bm\Omega' \x_i'}\)^2 - H_{\bm\Omega'}(\U') \nonumber\\
&= p \(R^2(\U,\bm\Omega';\I) - \(\frac{\x_i'^T \U' \x_i'}{\x_i'^T  \bm\Omega' \x_i'}\)^2\),i=1,\dots,n.
\label{med_oc}
\end{align}

Bound the moments of $S_1(\U')$
\small
\begin{align*}
&|\mathbb{E}[S_1(\U')^r]| = \left|p^r\sum_{j=0}^r (-1)^j {r \choose j} \[R^{2}(\U',\bm\Omega';\I)\]^{r-j} R^{2j}(\U',\bm\Omega';\I)\right| \\
&\leq  p^r\max_j \frac{r!}{(r-j)!j!}\(\frac{\norm{\U'}_F^2}{(1-\varepsilon)^2p}\)^{r-j} \frac{j!\norm{\U'}_F^{2j}}{(1-\varepsilon)^{2j}p^j} \leq \frac{r!}{2}\frac{2}{(1-\varepsilon)^{2r}}\norm{\U'}_F^{2r},
\end{align*}
\normalsize
which allows us to set
\begin{equation*}
\frac{\sigma}{\sqrt{2}} = L = \frac{1}{(1-\varepsilon)^2}.
\end{equation*}
Use Lemma \ref{m_bernstein_th} to get for $\frac{1}{2\sqrt{2}}\frac{1+1/p^2}{\ln(32\sqrt{2}p^2)} < \eta < \frac{1}{2\sqrt{2}}$
\small
\begin{align*}
&\mathbb{P}\(\lambda_{\min}\(\nabla^2 \widehat{f_{\bm\Omega'}}|_{\mathcal{L'}}\) \leq \lambda_{\min}(H_{\bm\Omega'}|_{\mathcal{L'}}) - \eta\sigma \) \\
&= \mathbb{P}\(\lambda_{\min}\(\widehat{S}|_{\mathcal{L'}} \) \leq -\eta\sigma \) = \mathbb{P}\(\lambda_{\max}\(-\widehat{S}|_{\mathcal{L'}} \) \geq \frac{\sqrt{2}\eta}{(1-\varepsilon)^2} \) \\
&\leq 2p^2\exp\(-\frac{n\eta\sqrt{2}}{8\ln(32\sqrt{2}p^2)}\)\(1+\frac{3(1-\varepsilon)^4}{n^2\eta^2\ln^2\(1+\frac{\eta(1-\varepsilon)^2}{\sqrt{2}}\)}\) \\
&\leq 2p^2\exp\(-\frac{n\eta\sqrt{2}}{8\ln(32\sqrt{2}p^2)}\)\(1+\frac{7}{n^2\eta^4}\).
\end{align*}
\normalsize
\end{proof}

\begin{proof}[Proof of Lemma \ref{lem:lem_3}]
From (\ref{exp_hes}) we have
\begin{equation*}
H_{\overline{\bm\Omega'}}(\Delta\bm\Omega') = \Tr{\(\overline{\bm\Omega'}^{-1}\Delta\bm\Omega'\)^2} - p R^2(\Delta\bm\Omega',\overline{\bm\Omega'};\I).
\end{equation*}
Using Lemma \ref{int_bounds} we obtain
\small
\begin{align*}
&H_{\overline{\bm\Omega'}}(\Delta\bm\Omega') = \Tr{\[(\I+\alpha\Delta\bm\Omega')^{-1}\Delta\bm\Omega'\]^2} - p R^2(\Delta\bm\Omega',\I+\alpha\Delta\bm\Omega';\I) \\
&=\Tr{\Delta\bm\Omega'^2-2\alpha\Delta\bm\Omega'^3 + h(\alpha\Delta\bm\Omega')\Delta\bm\Omega'^2} \\
&- p \[R^2(\Delta\bm\Omega',\I;\I)-2\alpha R^3(\Delta\bm\Omega',\I;\I) +l(\alpha\Delta\bm\Omega')\].
\end{align*}
\normalsize
Use the Lagrange remainder form for $h(\alpha\Delta\bm\Omega')$ and apply Lemma \ref{int_bounds} to bound $l(\alpha\Delta\bm\Omega')$ and get
\small
\begin{equation*}
|h(\alpha\Delta\bm\Omega')| \leq \frac{3(\alpha\varepsilon)^2}{(1-\alpha\varepsilon)^4},\quad|l(\alpha\Delta\bm\Omega')| \leq \frac{3\alpha^2}{(1-\alpha\varepsilon)^4}R^4(\Delta \bm\Omega',\I;\I).
\end{equation*}
\normalsize
Use formulas (\ref{sec_r_mom}), (\ref{t_r_mom}) to get
\small
\begin{align*}
&H_{\overline{\bm\Omega'}}(\Delta\bm\Omega') = \norm{\Delta\bm\Omega'}_F^2 - 2\alpha\Tr{\Delta\bm\Omega'^3}  - \frac{\(\Tr{\Delta\bm\Omega'}\)^2+2\norm{\Delta\bm\Omega'}_F^2}{p+2} \\ &+2\alpha\frac{(\Tr{\Delta\bm\Omega'})^3+6\Tr{\Delta\bm\Omega'}
\norm{\Delta\bm\Omega'}_F^2+8\Tr{\Delta\bm\Omega'^3}}{(p+2)(p+4)}\\
&+ r(\alpha\Delta\bm\Omega')\norm{\Delta\bm\Omega'}_F^2\\
&=H_{\bm\Omega_0}(\Delta\bm\Omega) + r(\alpha\Delta\bm\Omega')\norm{\Delta\bm\Omega'}_F^2 \\
&-2\alpha\(\frac{(p^2+6p)\Tr{\Delta\bm\Omega'^3} - (\Tr{\Delta\bm\Omega'})^3-6\Tr{\Delta\bm\Omega'}\norm{\Delta\bm\Omega'}_F^2}{(p+2)(p+4)}\)
\end{align*}
\begin{align*}
&\geq \(\frac{p}{p+2}\cos^2{\phi} - 2\alpha\norm{\Delta\bm\Omega'}_2 + r(\alpha\Delta\bm\Omega')\)\norm{\Delta\bm\Omega'}_F^2,
\end{align*}
\normalsize
where due to the condition $\norm{\Delta\bm\Omega'}_2 = \varepsilon \leq \frac{1}{6}$,
\begin{align*}
&|r(\alpha\Delta\bm\Omega')| \leq \frac{3(\alpha\varepsilon)^2}{(1-\alpha\varepsilon)^4} + p\frac{3\alpha^2}{(1-\alpha\varepsilon)^4}\frac{2\norm{\Delta\bm\Omega'}_F^2}{p^2} \\
&\leq \frac{(\alpha\varepsilon)^2(3+\frac{6}{p}p\varepsilon^2)}{(1-\alpha\varepsilon)^4} \leq \frac{\varepsilon^2(3+6\varepsilon^2)}{(1-\varepsilon)^4} \leq \frac{1}{6}.
\end{align*}
Here we have applied the bound from (\ref{r_k_bound}) to $R^4(\Delta \bm\Omega',\I;\I)$.
Finally,
\begin{equation*}
2\alpha\norm{\Delta\bm\Omega'}_2 \leq 2\varepsilon \leq \frac{1}{3}
\end{equation*}
\begin{equation*}
H_{\overline{\bm\Omega'}}(\Delta\bm\Omega') \geq \frac{p}{2(p+2)}\cos^2{\phi_0}\norm{\Delta\bm\Omega'}_F^2.
\end{equation*}
\end{proof}

\begin{proof}[Proof of Corollary \ref{cor_lem_3}]
Set
\begin{equation*}
\frac{1}{2\sqrt{2}}\frac{1+1/p^2}{\ln(32\sqrt{2}p^2)} \leq \eta = \tau\frac{(1-\varepsilon)^2}{\sqrt{2}}\frac{p}{2(p+2)}\cos^2{\phi_0} \leq \frac{1}{2\sqrt{2}},
\end{equation*} recall that $\varepsilon \leq \frac{p}{6(p+2)}\cos^2{\phi_0} \leq \frac{1}{6}$ and use Lemma \ref{lem:lem_2} to get
\begin{align*}
&\mathbb{P}\(\nabla^2 \widehat{f_{\overline{\bm\Omega'}}}(\U') \leq (1-\tau)\frac{p}{2(p+2)}\cos^2{\phi_0}\norm{\U'}_F^2\) \\
&\leq 2p^2\exp\(-\frac{n\eta\sqrt{2}}{8\ln(32\sqrt{2}p^2)}\)\(1+\frac{7}{n^2\eta^4}\) \\
&\leq 2p^2\exp\(-\frac{n\tau\cos^2{\phi_0}}{46\ln(7p)(1+\frac{2}{p})}\)\(1+\frac{2\cdot10^3(1+\frac{2}{p})^4}{n^2\tau^4\cos^8{\phi_0}}\).
\end{align*}
\end{proof}

\end{appendices}

\bibliographystyle{IEEEtran}
\bibliography{ilya_bib}

\begin{thebibliography}{10}
\providecommand{\url}[1]{#1}
\csname url@samestyle\endcsname
\providecommand{\newblock}{\relax}
\providecommand{\bibinfo}[2]{#2}
\providecommand{\BIBentrySTDinterwordspacing}{\spaceskip=0pt\relax}
\providecommand{\BIBentryALTinterwordstretchfactor}{4}
\providecommand{\BIBentryALTinterwordspacing}{\spaceskip=\fontdimen2\font plus
\BIBentryALTinterwordstretchfactor\fontdimen3\font minus
  \fontdimen4\font\relax}
\providecommand{\BIBforeignlanguage}[2]{{%
\expandafter\ifx\csname l@#1\endcsname\relax
\typeout{** WARNING: IEEEtran.bst: No hyphenation pattern has been}%
\typeout{** loaded for the language `#1'. Using the pattern for}%
\typeout{** the default language instead.}%
\else
\language=\csname l@#1\endcsname
\fi
#2}}
\providecommand{\BIBdecl}{\relax}
\BIBdecl

\bibitem{ledoit2003improved}
O.~Ledoit and M.~Wolf, ``Improved estimation of the covariance matrix of stock
  returns with an application to portfolio selection,'' \emph{Journal of
  empirical finance}, vol.~10, no.~5, pp. 603--621, 2003.

\bibitem{bickel2008regularized}
P.~J. Bickel and E.~Levina, ``Regularized estimation of large covariance
  matrices,'' \emph{The Annals of Statistics}, pp. 199--227, 2008.

\bibitem{rothman2008sparse}
A.~J. Rothman, P.~J. Bickel, E.~Levina, and J.~Zhu, ``Sparse permutation
  invariant covariance estimation,'' \emph{Electronic Journal of Statistics},
  vol.~2, pp. 494--515, 2008.

\bibitem{tyler1987distribution}
D.~E. Tyler, ``A distribution-free {M}-estimator of multivariate scatter,''
  \emph{The Annals of Statistics}, vol.~15, no.~1, pp. 234--251, 1987.

\bibitem{marvcenko1967distribution}
V.~A. Marcenko and L.~A. Pastur, ``Distribution of eigenvalues for some sets of
  random matrices,'' \emph{Sbornik: Mathematics}, vol.~1, no.~4, pp. 457--483,
  1967.

\bibitem{tracy1996orthogonal}
C.~A. Tracy and H.~Widom, ``On orthogonal and symplectic matrix ensembles,''
  \emph{Communications in Mathematical Physics}, vol. 177, no.~3, pp. 727--754,
  1996.

\bibitem{el2007tracy}
N.~El~Karoui, ``{T}racy-{W}idom limit for the largest eigenvalue of a large
  class of complex sample covariance matrices,'' \emph{The Annals of
  Probability}, vol.~35, no.~2, pp. 663--714, 2007.

\bibitem{bai2008limit}
Z.~Bai and J.-F. Yao, ``Limit theorems for sample eigenvalues in a generalized
  spiked population model,'' \emph{arXiv preprint arXiv:0806.1141}, 2008.

\bibitem{baik2006eigenvalues}
J.~Baik and J.~W. Silverstein, ``Eigenvalues of large sample covariance
  matrices of spiked population models,'' \emph{Journal of Multivariate
  Analysis}, vol.~97, no.~6, pp. 1382--1408, 2006.

\bibitem{cardoso2008cooperative}
L.~S. Cardoso, M.~Debbah, P.~Bianchi, and J.~Najim, ``Cooperative spectrum
  sensing using random matrix theory,'' in \emph{3rd International Symposium on
  Wireless Pervasive Computing}.\hskip 1em plus 0.5em minus 0.4em\relax IEEE,
  2008, pp. 334--338.

\bibitem{couillet2011fluctuations}
R.~Couillet and W.~Hachem, ``Fluctuations of spiked random matrix models and
  failure diagnosis in sensor networks,'' \emph{IEEE Transactions on
  Information Theory}, vol.~59, no.~1, pp. 509--525, 2012.

\bibitem{mestre2008modified}
X.~Mestre and M.~A. Lagunas, ``Modified subspace algorithms for {D}o{A}
  estimation with large arrays,'' \emph{IEEE Transactions on Signal
  Processing}, vol.~56, no.~2, pp. 598--614, 2008.

\bibitem{schmidt1986multiple}
R.~Schmidt, ``Multiple emitter location and signal parameter estimation,''
  \emph{IEEE Transactions on Antennas and Propagation}, vol.~34, no.~3, pp.
  276--280, 1986.

\bibitem{couillet2011eigen}
R.~Couillet, J.~W. Silverstein, Z.~Bai, and M.~Debbah, ``Eigen-inference for
  energy estimation of multiple sources,'' \emph{IEEE Transactions on
  Information Theory}, vol.~57, no.~4, pp. 2420--2439, 2011.

\bibitem{vallet2012improved}
P.~Vallet, P.~Loubaton, and X.~Mestre, ``Improved subspace estimation for
  multivariate observations of high dimension: the deterministic signals
  case,'' \emph{IEEE Transactions on Information Theory}, vol.~58, no.~2, pp.
  1043--1068, 2012.

\bibitem{vershynin2012compressed}
R.~Vershynin, ``Introduction to the non-asymptotic analysis of random
  matrices,'' \emph{Compressed sensing: theory and applications, Edited by Y.
  Eldar and G. Kutyniok, Cambridge University Press}, 2012.

\bibitem{vershynin2012close}
------, ``How close is the sample covariance matrix to the actual covariance
  matrix?'' \emph{Journal of Theoretical Probability}, vol.~25, no.~3, pp.
  655--686, 2012.

\bibitem{srivastava2011covariance}
N.~Srivastava and R.~Vershynin, ``Covariance estimation for distributions with
  2+$\varepsilon$ moments,'' \emph{arXiv preprint arXiv:1106.2775}, 2011.

\bibitem{ravikumar2011high}
P.~Ravikumar, M.~J. Wainwright, G.~Raskutti, and B.~Yu, ``High-dimensional
  covariance estimation by minimizing $l_1$-penalized log-determinant
  divergence,'' \emph{Electronic Journal of Statistics}, vol.~5, pp. 935--980,
  2011.

\bibitem{huber1964robust}
P.~J. Huber, ``Robust estimation of a location parameter,'' \emph{The Annals of
  Mathematical Statistics}, vol.~35, no.~1, pp. 73--101, 1964.

\bibitem{maronna1976robust}
R.~A. Maronna, ``Robust {M}-estimators of multivariate location and scatter,''
  \emph{The annals of statistics}, pp. 51--67, 1976.

\bibitem{frahm2004generalized}
G.~Frahm, ``Generalized elliptical distributions: theory and applications,''
  \emph{Universit{\"a}t zu K{\"o}ln}, 2004.

\bibitem{chen2011robust}
Y.~Chen, A.~Wiesel, and A.~O. Hero, ``Robust shrinkage estimation of
  high-dimensional covariance matrices,'' \emph{IEEE Transactions on Signal
  Processing}, vol.~59, no.~9, pp. 4097--4107, 2011.

\bibitem{ollila2003robust}
E.~Ollila and V.~Koivunen, ``Robust antenna array processing using
  {M}-estimators of pseudo-covariance,'' \emph{14th IEEE Proceedings on
  Personal, Indoor and Mobile Radio Communications}, vol.~3, pp. 2659--2663,
  2003.

\bibitem{abramovich2007time}
Y.~I. Abramovich, N.~K. Spencer, and M.~D. Turley, ``Time-varying
  autoregressive ({TVAR}) models for multiple radar observations,'' \emph{IEEE
  Transactions on Signal Processing}, vol.~55, no.~4, pp. 1298--1311, 2007.

\bibitem{ollila2012complex}
E.~Ollila, D.~Tyler, V.~Koivunen, and H.~Poor, ``Complex {e}lliptically
  symmetric distributions: survey, new results and applications,'' \emph{IEEE
  Transactions on Signal Processing}, vol.~60, no.~11, pp. 5597--5625, 2012.

\bibitem{bandiera2010knowledge}
F.~Bandiera, O.~Besson, and G.~Ricci, ``Knowledge-aided covariance matrix
  estimation and adaptive detection in compound-{G}aussian noise,'' \emph{IEEE
  Transactions on Signal Processing}, vol.~58, no.~10, pp. 5391--5396, 2010.

\bibitem{pascal2008covariance}
F.~Pascal, Y.~Chitour, J.~P. Ovarlez, P.~Forster, and P.~Larzabal, ``Covariance
  structure maximum-likelihood estimates in compound {G}aussian noise:
  Existence and algorithm analysis,'' \emph{IEEE Transactions on Signal
  Processing}, vol.~56, no.~1, pp. 34--48, 2008.

\bibitem{besson2013fisher}
O.~Besson and Y.~Abramovich, ``On the {F}isher information matrix for
  multivariate elliptically contoured distributions,'' \emph{Signal Processing
  Letters, IEEE}, vol.~20, no.~11, pp. 1130--1133, 2013.

\bibitem{greco2013cramer}
M.~Greco and F.~Gini, ``Cram{\'e}r-{R}ao lower bounds on covariance matrix
  estimation for complex elliptically symmetric distributions,'' \emph{IEEE
  Transactions on Signal Processing}, vol.~61, no.~24, pp. 6401--6409, 2013.

\bibitem{duembgen1997asymptotic}
L.~Duembgen, ``The asymptotic behavior of {T}yler’s {M}-estimator of scatter
  in high dimension,'' \emph{Ann. Inst. Statist. Math}, vol.~50, pp. 471--491,
  1997.

\bibitem{frahm2012semicircle}
G.~Frahm and K.~Glombek, ``Semicircle law of {T}yler’s {M}-estimator for
  scatter,'' \emph{Statistics \& Probability Letters}, vol.~82, no.~5, pp.
  959--964, 2012.

\bibitem{zhang2014marchenko}
T.~Zhang, X.~Cheng, and A.~Singer, ``{M}archenko-{P}astur {L}aw for {T}yler's
  and {M}aronna's {M}-estimators,'' \emph{arXiv preprint arXiv:1401.3424},
  2014.

\bibitem{couillet2014large}
R.~Couillet and M.~R. McKay, ``Large dimensional analysis and optimization of
  robust shrinkage covariance matrix estimators,'' \emph{arXiv preprint
  arXiv:1401.4083}, 2014.

\bibitem{couillet2013random}
R.~Couillet, F.~Pascal, and J.~W. Silverstein, ``The random matrix regime of
  {M}aronna's {M}-estimator with elliptically distributed samples,''
  \emph{arXiv preprint arXiv:1311.7034}, 2013.

\bibitem{couillet2012robust}
------, ``Robust {M}-estimation for array processing: a random matrix
  approach,'' \emph{IEEE Transactions on Information Theory, Submitted for
  publication, Available: http://arxiv. org/abs/1204.5320}, 2012.

\bibitem{tyler1987statistical}
D.~E. Tyler, ``Statistical analysis for the angular central {G}aussian
  distribution on the sphere,'' \emph{Biometrika}, vol.~74, no.~3, pp.
  579--589, 1987.

\bibitem{frahm2007tyler}
G.~Frahm and U.~Jaekel, ``{T}yler's {M}-estimator, random matrix theory, and
  {G}eneralized {E}lliptical distributions with applications to finance,''
  Tech. Rep., 2007.

\bibitem{wiesel2012geodesic}
A.~Wiesel, ``Geodesic convexity and covariance estimation,'' \emph{IEEE
  Transactions on Signal Processing}, vol.~60, no.~12, pp. 6182--6189, 2012.

\bibitem{wiesel2012unified}
------, ``Unified framework to regularized covariance estimation in scaled
  gaussian models,'' \emph{IEEE Transactions on Signal Processing}, vol.~60,
  no.~1, pp. 29--38, 2012.

\bibitem{zhang2013multivariate}
T.~Zhang, A.~Wiesel, and M.~S. Greco, ``Multivariate generalized gaussian
  distribution: Convexity and graphical models,'' \emph{IEEE Transactions on
  Signal Processing}, vol.~61, no.~16, pp. 4141--4148, 2013.

\bibitem{bradley2012sample}
J.~K. Bradley and C.~Guestrin, ``Sample complexity of composite likelihood,''
  \emph{International Conference on Artificial Intelligence and Statistics},
  pp. 136--160, 2012.

\bibitem{ledoit2002some}
O.~Ledoit and M.~Wolf, ``Some hypothesis tests for the covariance matrix when
  the dimension is large compared to the sample size,'' \emph{Annals of
  Statistics}, pp. 1081--1102, 2002.

\bibitem{de1980exact}
J.~G. De~Gooijer, ``Exact moments of the sample autocorrelations from series
  generated by general {ARIMA} processes of order (p,d,q),d=0 or 1,''
  \emph{Journal of Econometrics}, vol.~14, no.~3, pp. 365--379, 1980.

\bibitem{yurinskiui1976exponential}
V.~Yurinskii, ``Exponential inequalities for sums of random vectors,''
  \emph{Journal of multivariate analysis}, vol.~6, no.~4, pp. 473--499, 1976.

\bibitem{minsker2011some}
S.~Minsker, ``On some extensions of {B}ernstein's inequality for self-adjoint
  operators,'' \emph{arXiv preprint arXiv:1112.5448}, 2011.

\end{thebibliography}

\end{document}